 \renewcommand{\div}{\mathop{\mathrm{div}}\nolimits}
 \renewcommand{\span}{\mathop{\mathrm{Span}}\nolimits}
\newtheorem*{thm*}{Theorem A}
\newtheorem{thm}{Theorem}[section]
\newtheorem{dfn}{Definition}[section]
\newtheorem{lemma}{Lemma}[section]
\newtheorem{prop}{Proposition}[section]
\numberwithin{equation}{section}
\begin{document}

\def\IR{{\mathbb{R}}}

\title[]{Stable solutions of symmetric systems involving hypoelliptic operators}

\author{Mostafa Fazly}

\address{Department of Mathematics,  University of Texas at San Antonio, San Antonio, TX 78249, USA}
\email{mostafa.fazly@utsa.edu}

\thanks{The author gratefully acknowledges University of Texas at San Antonio Start-up Grant.}

\maketitle

\begin{abstract}  
Let $X$ and $Y$ be two noncommuting vector fields in an open set $\Omega$  in a  manifold $\mathbb M$ equipped with a sub-Riemannian structure.    We examine stable solutions of the following symmetric system  
\begin{equation*}
\Delta_{XY}  u_i =  H_i(u_1,\cdots,u_m)  \ \ \text{in } \ \ \Omega \ \ \text{for}\ \ 1\le i \le m, 
  \end{equation*}  
when the operator $\Delta_{XY} $ is the H\"{o}rmander's operator given by $\Delta_{XY} (\cdot):= X(X\cdot) + Y(Y\cdot) $ and $H_i\in C^1(\mathbb R^m) $.  We prove the following identity for any $w \in C^2(\Omega)$
\begin{equation*}\label{} 
  |\nabla_{XY}X w|^2  +  |\nabla_{XY}Y w|^2  - | X| \nabla_{XY} w||^2 - | Y|\nabla_{XY} w||^2
 = 
\left\{ \begin{array}{ll}
                     |\nabla_{XY} w|^{2} \left[\mathcal A^2 +  \mathcal B^2 \right] & \hbox{in $\{|\nabla_{XY} w|>0\}\cap \Omega $,} \\
                       0 \ a.e.& \hbox{in $\{|\nabla_{XY} w|=0\}\cap \Omega$,}
                                                                       \end{array}
                    \right.   \end{equation*} 
where $\mathcal A$ is the intrinsic curvature of the level sets of $w$ and $\mathcal B $ is connected with the intrinsic normal and  the intrinsic tangent direction to the level sets and also with the Lie bracket $[X,Y]$.  We then apply this to establish a geometric Poincar\'{e} inequality for stable solutions of the above system for general vector fields $X$ and $Y$. This inequality enables us to analyze the level sets of stable solutions. In addition, we provide  certain reduction of dimensions results which can be regarded as counterparts of  the classical De Giorgi type results. This is remarkable since the  classical one-dimensional symmetry results do not hold for general vector fields.  Our approaches can be applied, but not limited, to the Grushin vector fields $X=(1,0)$ and $Y=(0,x)$ in $\mathbb R^2$ and the Heisenberg  vector fields $X=(1,0,-\frac{y}{2})$ and $Y=(0,1,\frac{x}{2})$ in $\mathbb R^3$ and their multidimensional extensions.  These specific vector fields generate nonelliptic operators which are hypoelliptic.

\end{abstract}

\noindent
{\it \footnotesize 2010 Mathematics Subject Classification}: {\scriptsize   35R03, 35A23, 53C17, 53C21, 35H10.}\\
{\it \footnotesize Keywords:   Hypoelliptic equations and systems, Heisenberg group,  Grushin plane, stable solutions, De Giorgi's conjecture}. {\scriptsize }

\section{Introduction} 
It is by now standard that any twice continuous differentiable solutions
of the Laplace's equation is an analytic function. An analogous property applies for any elliptic equations and systems
with analytic coefficients.  Inspired by this fact, the question of describing more general
differential operators $P(x,D)$ having the property that any solution
of the linear equation 
\begin{equation}
P(x,D)u = f \ \ \text{in} \ \ \Omega \subset\mathbb R^n \ \text{(or a manifold)},
\end{equation}
 is in $C^\infty(\Omega)$ when $f \in C^\infty(\Omega)$ was introduced  by Schwartz \cite{Sc} and H\"{o}rmander \cite{h1,h2,h3} and it has been studied extensively in the literature since then.  The literature in this context is too vast to give more than a few representative references \cite{rs,rock,m,Mil,gr,fo,fs,eg}.   Differential operators satisfying this property are called hypoelliptic. In addition, the heat equation operator
\begin{equation}\label{PxDt}
P(x,D)u=u_{t}- c \Delta_{x} u, 
\end{equation}
 when $c>0$ is hypoelliptic but not elliptic and the wave equation operator 
\begin{equation}\label{PxDtt}
P(x,D)u=u_{tt}- c^2 \Delta_{x} u ,
\end{equation}
 when $c\neq 0$ is not hypoelliptic. This implies that hypoellipticity is not necessarily ellipticity. However, every elliptic operator with $C^{\infty }$ coefficients is hypoelliptic.   Generally speaking, when the coefficients in the above operator $P$ are constant, a complete algebraic characterization of hypoelliptic operators are derived by  H\"{o}rmander in 1950's, see  \cite{h1}. However, the case of nonconstant coefficients is more challenging and some sufficient and necessary conditions are given by various authors, see \cite{h2,h3,m,Mil,rs} and references therein. 

In 1970,  Grushin \cite{gr} studied the following  class of differential operators
with polynomial coefficients which are not elliptic, but satisfy the H\"{o}rmander's conditions for hypoellipticity under certain assumptions; 
\begin{equation}\label{GrG}
P(x,D)u=\sum_{|\alpha|+|\beta| \le N, |\gamma|\le N \delta} a_{\alpha\beta\gamma} (x' )^\gamma D^\alpha_{x'} D^\beta_{x''}u, 
\end{equation}
for $x=(x',x'')\in \mathbb R^{n-k} \times \mathbb R^{k}$,  some constant  $a_{\alpha\beta\gamma}$ and  positive parameters $\delta,N$.  Here $\alpha$, $\beta$ and $\gamma$ are constant vectors. In particular, the second order operator 
\begin{equation}\label{Gr}
P(x,D)u=\Delta_{x'} u+ |x'|^{2s} \Delta_{x''} u , 
\end{equation}
is hypoelletic for any natural numbers $m,s$.  The operators given by (\ref{GrG}) and in particular (\ref{Gr}) are known as the Grushin operator.  The Grushin operator in two dimensions, and for $s=1$, is generated by the vector fields 
\begin{equation}\label{GrXY}
X = \partial_x \ \ \text{and} \ \ \ Y = x \partial_y.
\end{equation}
 In a more algebraic context, the Grushin plane
is defined by the vector fields $X = (1, 0)$ and $Y = (0, x)$ in two dimensions.  Note that $X$ and $Y$ do not commute and we denote the commutator by $[X, Y] = Z = \partial_y$.  Note also that $Z$ commutes with both $X$ and $Y$. The sub-Laplacian operator associated with the above vector fields is 
\begin{equation}\label{DeltaG}
\Delta_{\mathbb G} u:= X(X u) + Y(Yu) = \partial_{xx} + |x|^2 \partial_{yy}. 
\end{equation}
Another example for an operator with variable coefficients is the Heisenberg
group that plays an ubiquitous role in analysis and geometry.  The Heisenberg operator is generated by the vector fields 
\begin{equation}\label{HeisXY}
X = \partial_x - \frac{y}{2}\partial_z \ \ \text{and} \ \   Y =  \partial_y + \frac{x}{2} \partial_z. 
\end{equation}
Note that $X$ and $Y$ do not commute and we denote the commutator by $[X,Y]=Z=\partial_z$.  In other words,   the Heisenberg vector fields are given by $X=(1,0,-\frac{y}{2})$ and $Y=(0,1,\frac{x}{2})$ in $\mathbb R^3$.   Note that the sub-Laplacian operator associated with the above vector fields is 
\begin{equation}\label{Dh}
\Delta_{\mathbb H} u:= X(X u) + Y(Yu) = \Delta_{x,y} + \frac{1}{4} (|x|^2+|y|^2) \partial_{zz} - \partial_z (y\partial_x - x\partial_y) ,
\end{equation}
which coincides with the real part of the complex Kohn-Spencer sub-Laplacian operator.  It is known  that the analysis of the Grushin operator  is closely connected to that of the real part of Kohn-Spencer sub-Laplacian operator on the
Heisenberg group, at least in the case $s=1$. 

In the current article, we examine solutions of the following system of equations for a collection of smooth vector fields $X$ and $Y$  satisfying the
H\"{o}rmander bracket condition
\begin{equation}\label{mainL}
\Delta_{XY} u_i =  H_i(u_1,\cdots,u_m) \ \ \text{in} \ \ \Omega \ \ \ \text{for} \ \ 1\le i\le m, 
  \end{equation}  
when the operator $\Delta_{XY} $ is given by 
\begin{equation}\label{DeltaXY}
 \Delta_{XY} (\cdot) := X(X \cdot) + Y(Y \cdot ) , 
   \end{equation}  
and  $\Omega$ is a subset of $\mathbb R^n$ and $H_i\in C^1(\mathbb R^m) $ for $n,m\ge1 $.  The operator $\Delta_{XY}$ is known as the H\"{o}rmander's sum of squares.    The operator $\Delta_{XY}$ is in divergence form and it is given by 
\begin{equation}\label{LG}
\Delta_{XY} u_i = \div_{XY}( \nabla_{XY} u_i ) , 
  \end{equation}  
when  
\begin{equation}\label{mainG}
\nabla_{XY} u_i := (Xu_i) X + (Y u_i) Y. 
  \end{equation}  
  Here the divergence operator is of the following form 
  \begin{equation}
  \div_{XY}( \zeta) =  X \zeta_1 + Y \zeta_2, 
  \end{equation}
for $\zeta=\zeta_1 X+\zeta_2 Y$.   The notation $\langle , \rangle _{XY}$ stands for the standard scalar product, 
\begin{equation}
\langle \zeta, \theta \rangle_{XY}= \zeta_1\theta_1+ \zeta_2\theta_2, 
\end{equation}
for vector fields $\zeta=\zeta_1 X+\zeta_2 Y$ and $\theta=\theta_1 X+\theta_2 Y$. This in particular implies that 
\begin{equation*}
\langle \nabla_{XY} u_i ,\nabla_{XY} u_i  \rangle_{XY} = | Xu_i|^2 +| Yu_i|^2 \ \text{and} \ 
|  \nabla_{XY} u_i   | =\sqrt{\langle \nabla_{XY} u_i ,\nabla_{XY} u_i  \rangle_{XY} }. 
\end{equation*}
In order to illustrate the above formulation, consider the Euclidean space that is $\Omega=\mathbb R^2$ with $X=\partial_{x}$ and $Y=\partial_y$, then $\nabla_{XY}=(\partial_x,\partial_y)$ and $\Delta_{XY}=\partial_{xx}+\partial_{yy}$.  We now provide the notion of pointwise-stable  solutions. 

\begin{dfn} \label{stable}
A solution $u=(u_i)_{i=1}^m$ of (\ref{mainL}) is said to be pointwise-stable when there exists a sequence of functions $\phi=(\phi_i)_{i=1}^m $ where each $\phi_i\in C^3(\Omega)$ does not change sign such that the following linearized system holds
\begin{equation} \label{sta}
\Delta_{XY} \phi_i =   \sum_{j=1}^{m} \partial_j H_i(u) \phi_j  ,
 \end{equation} 
 for all $ i=1,\cdots,m$.  In addition, when $m\ge 2$ we assume that $\partial_j H_i(u) \phi_i \phi_j <0$ for all $1\le i\neq j\le m$. 
\end{dfn} 
Consider the case of scalar equations that is when $m=1$. Note that for the case of Euclidean space, monotonicity of a solution in any direction implies that the solution is pointwise-stable. For the case of general vector fields, this is more delicate and therefore more interesting. For the case of Grushin and Heienberg vector fields given in (\ref{GrXY}) and (\ref{HeisXY}), monotonicity only in the Lie bracket vector field $Z$,  that is $Zu$ does not change sign, 
 implies pointwise-stability.  For the case of system of equations, the monotonicity of solutions is introduced in \cite{fg,mf} in the context of Euclidean space. A solution $u=(u_k)_{k=1}^m$ of (\ref{mainL}) is said to be {\it $H$-monotone} if the following conditions hold
\begin{enumerate}
 \item[(i)] For every $1\le i \le m$, each $u_i$ is strictly monotone in the Lie bracket vector field $Z$ (i.e., $Z u_i\neq 0$).

\item[(ii)]  For $i,j=1,\cdots,m$,  we have 
  \begin{equation}
\hbox{$\partial_j H_i(u) Z u_i(x) Z u_j (x) < 0$  \ for all $x\in\Omega$.}
\end{equation}
\end{enumerate}
 We shall say that the system (\ref{mainL}) (or the nonlinearity $H$) is {\it orientable}, if there exists $\theta=(\theta_k)_{k=1}^m$ such that each $\theta_k$ is a nonzero function which does not change sign and  
 \begin{equation}
 \partial_j H_i(u) \theta_i(x)\theta_j(x) < 0 \ \ \text{for all} \ \ x\in\Omega,
  \end{equation} 
 for all $1\leq  i,j\leq m$. Note that the above condition on the system means that none of the mixed derivative $\partial_j H_i(u)$  changes sign.  It is straightforward to notice that $H$-monotonicity implies pointwise-stability. However, the reverse is not necessarily true.  We now provide the notion of symmetric systems introduced in \cite{mf,mf2} when $\Omega=\mathbb R^n$.  Symmetric systems play a fundamental role throughout this paper when we study system (\ref{mainL}) with a general nonlinearity $H(u)=(H_i(u))_{i=1}^m$.    
\begin{dfn}\label{symmetric} We call system (\ref{mainL}) symmetric if the matrix of gradient of all components of $H$ that is 
 \begin{equation} \label{H}
\mathcal {H}:=(\partial_i H_j(u))_{i,j=1}^{m} ,
 \end{equation}
  is symmetric. 
   \end{dfn}

\subsection{Scalar equations; case $m=1$} In the Euclidean space,  (\ref{mainL}) reads 
\begin{equation}\label{mainf}
\Delta u =  f(u) \ \ \ \text{in} \ \ \mathbb R^n. 
  \end{equation}   
In 1978,  De Giorgi \cite{deg} conjectured that bounded monotone solutions of Allen-Cahn equation, (\ref{mainf}) with $f(u)=u^3-u$, are one-dimensional solutions at least up to eight dimensions. In a more geometrical context, the statement implies that solutions must be such
that its level sets $\{u = \lambda\}$ are all hyperplanes.    There is an affirmative answer to this conjecture for almost all dimensions.  More precisely,  for two dimensions  Ghoussoub and Gui in \cite{gg1} and for three dimensions Ambrosio and Cabr\'{e} in \cite{ac} and with Alberti in \cite{aac} gave a proof to this conjecture not only for Allen-Cahn equation but also for a general nonlinearity $f$ that is locally Lipschitz.  We also refer interested readers to \cite{bcn}.  The conjecture remains open in dimensions $4\leq n \leq 8$. However, 
Ghoussoub and Gui  showed in \cite{gg2} that it is true for $n= 4$ and $n = 5$ for solutions that satisfy certain anti-symmetry conditions, and Savin \cite{sav} established its validity   for $4 \le n \le 8$ under the following additional natural hypothesis on the solutions,
 \begin{equation}\label{asymp}
\lim_{x_n\to\pm\infty } u({x}',x_n)\to \pm 1 \ \ \ \text{for} \ \ \ x' \in\mathbb R^{n-1}. 
\end{equation}
Unlike the above proofs in dimensions $n\leq 5$, the proof of Savin is nonvariational.  Note that there is an example by del Pino, Kowalczyk and Wei in \cite{dkw}  showing that eight dimensions is the critical dimension.  When the limits in (\ref{asymp}) are uniform, the De Giorgi's conjecture is known as the Gibbons' conjecture. This conjecture has been proved  for all dimensions independently by Barlow,  Bass and  Gui in \cite{bbg}, Berestycki,  Hamel and  Monneau in \cite{bhm} and Farina in \cite{f}. We also refer interested readers to Pacard and Wei in \cite{pw} for the stability conjecture that is when the monotonicity assumption is replaced with the stability notion in the De Giorgi's conjecture.

 Farina,  Sciunzi and Valdinoci in \cite{fsv} proved and applied a geometrical inequality, originally driven by Sternberg and Zumbrun in \cite{sz,sz1}, to provide various De Giorgi type results for elliptic equations in  two dimensions. This inequality has been of great interests in the literature and it has been established for various type equations on domains with diverse geometrical features.  In order to establish the inequality, the following identity is a crucial geometrical technique, established in \cite{sz,sz1}.  For any $w \in C^2(\Xi)$ when $\Xi\subset\mathbb R^n$ is any open set,  
 \begin{eqnarray}\label{identity} 
 \sum_{i=1}^{n} |\nabla \partial_i w|^2-|\nabla|\nabla w||^2
 = 
\left\{
                      \begin{array}{ll}
                       |\nabla w|^2 \left(\sum_{i=1}^{n-1} \mathcal{\kappa}_i^2\right) +|\nabla_{\bot }|\nabla w||^2 & \hbox{in $\{|\nabla w|>0\}\cap \Xi $,} \\
                       0 \ a.e.& \hbox{in $\{|\nabla w|=0\}\cap \Xi $,}
                                                                       \end{array}
                    \right.   \end{eqnarray} 
 where $ \mathcal{\kappa}_i$ are the principal curvatures of the level set of $w$ and $\nabla_\bot$ denotes the orthogonal projection of the gradient along this level set.    Farina,  Sire and Valdinoci in \cite{fsv1,fsv2} provided a similar inequality for stable solutions of elliptic equations on Riemannian manifolds and they applied it to establish Liouville type results on smooth, boundaryless Riemannian manifolds and the flatness of level sets of the solutions in  two dimensions.

Birindelli and Prajapat  in \cite{bp1} proved the Gibbons' conjecture in the context of Heisenberg group $\mathbb H$ for the equation 
\begin{equation}\label{mainH}
\Delta_{\mathbb H} u =  f(u),  
\end{equation}
for all directions orthogonal to the center of the Heisenberg group. However, the question of whether the conjecture holds in the direction of the center was left open in their work. Birindelli and Lanconelli in \cite{bl} proved the existence of a cylindrically symmetric solution to the above equation under the assumptions $Zu>0$, $|u|<1$  and 
\begin{equation}\label{asympt}
\lim_{z\to\pm\infty } u({x},y,z)\to \pm 1. 
\end{equation}
A solution $u(x,y,z)$ is called cylindrically symmetric if there exist a function $U$ such that $u(x,y, z) = U (r, z)$ when $r = |(x,y)|$ that implies 
\begin{equation}
\Delta_{\mathbb H} u (x,y,z) = \partial_{rr} U + \frac{1}{r} \partial_r{U} + 4 r^2 \partial_{zz} U. 
\end{equation}
As a direct consequence of the above result is that the De Giorgi conjecture is not true in the direction of the center of the group $\mathbb H$.  Ferrari and  Valdinoci in \cite{fv1} established a geometric inequality for stable solutions of (\ref{mainH}) and applied it to show that solutions have level sets with
vanishing mean curvature under certain assumptions.  In addition, Ferrari and  Valdinoci in \cite{fv2}  established  a geometric Poincar\'{e} type inequality for stable solutions of  the following equation on the Grushin plane 
\begin{equation}\label{mainG}
\Delta_{\mathbb G} u =  f(u).   
\end{equation}
Then, under certain assumptions on solutions, they applied the inequality together with some geometrical arguments to show that $u(x,y)$ depends only on the $x$-variable and to analyze the level sets of the solutions in two dimensions.  Inspired by the results provided in \cite{bl,bp1}, Birindelli and  Valdinoci in \cite{bv} proved the existence of a monotone solution that is $Zu>0$ for (\ref{mainG}) when 
 \begin{equation}\label{asympy}
\lim_{y\to\pm\infty } u(x,y)\to \pm 1 \ \ \text{for} \ \ \ x\in\mathbb R, 
\end{equation}
that is not one-dimensional.

\subsection{System of equations; case $m\ge 2$} In the Euclidean space,  (\ref{mainL}) is  
\begin{equation}\label{mainD}
\Delta u_i =  H_i(u_1,\cdots,u_m)  \ \ \text{in} \ \ \mathbb R^n.
  \end{equation} 
Ghoussoub and the author in \cite{fg}, and later in \cite{mf1,mf}, established De Giorgi type results for $H$-monotone and stable solutions of the above symmetric system in two and three dimensions.  Just like in the proof of the classical De Giorgi conjecture, the proof provided for the case of systems relies on a linear Liouville theorem and a geometrical Poincar\'{e} inequality. This system has been studied in the literature from various perspectives. Alama, Bronsard and Gui in \cite{abg}, proved that for certain nonlinearity $H$ and $m=2$ the system admits two-dimensional solutions that are not $H$-monotone. In addition,  in a series of articles in \cite{ali1,ali2,af} symmetry results and Liouville theorems, amongst other results, are proved.  The author considered the above system (\ref{mainD}) on Riemannian manifolds in \cite{mf2}  and established a geometrical inequality to study the level set of solutions and to prove Liouville theorems.

  In the current article,  we are interested in the analysis of level sets and in the reduction of dimensions results for stable solutions of (\ref{mainL}) with vector fields $X$ and $Y$.  In order to prove our main results, we establish a  geometric Poincar\'{e} type inequality for stable solutions following ideas and mathematical techniques provided in \cite{sz1,sz,fv1,fv2,fg,mf1,fsv} and references therein. In this regard, we establish a counterpart of the identity (\ref{identity}) for any $w \in C^2(\Omega)$ when $\Omega$ is any open set in a sub-Riemannian manifold $\mathbb M$ and for general vector fields $X$ and $Y$,
 \begin{eqnarray}\label{identity1} 
&& |\nabla_{XY}X w|^2  +  |\nabla_{XY}Y w|^2  - | X| \nabla_{XY} w||^2 - | Y|\nabla_{XY} w||^2
 \\&=& \nonumber
\left\{
                      \begin{array}{ll}
                     |\nabla_{XY} w|^{2} \left[\mathcal A^2 +  \mathcal B^2 \right] & \hbox{in $\{|\nabla_{XY} w|>0\}\cap \Omega $,} \\
                       0 \ a.e. & \hbox{in $\{|\nabla_{XY} w|=0\}\cap \Omega$,}
                                                                       \end{array}
                    \right.  
                     \end{eqnarray} 
where $\mathcal A$ is the intrinsic curvature of the level sets of each $u_i$ that is 
\begin{eqnarray}
\mathcal A &:=&    \div_{XY} \left( \eta  \right) , 
\\
\mathcal B &:=& |\nabla_{XY} w|^{-1}  \left[   -Zw + \langle H_{ess} w \tau, \eta \rangle \right]. 
\end{eqnarray}  
  Here, $\eta$ is the intrinsic normal and $\tau $ is the intrinsic tangent direction to the level sets of each $w$ on $\{ | \nabla_{XY} w | \neq 0 \}$ denoted by 
\begin{eqnarray}
\eta &:=&  |\nabla_{XY} w|^{-1} \left[   Xw X + Yw Y \right],
\\
\tau &:= & |\nabla_{XY} w|^{-1} \left[Yw X-Xw Y \right] .
\end{eqnarray}  
In addition,  the horizontal intrinsic Hessian matrix is 
\begin{equation}\label{Hess}
H_{ess} w:=
\begin{bmatrix}
    XXw & YXw  \\
     XYw & YYw 
     \end{bmatrix} . 
  \end{equation}  
Since two vector fields are non-commutative, $H_{ess} w$ is not symmetric.  We apply this  identity to prove that stable solutions of symmetric system (\ref{mainL}) satisfy the following geometric Poincar\'{e} type inequality 
\begin{eqnarray} \label{}
\label{ineq1} && \sum_{i=1}^m \int_{\Omega \cap \{ | \nabla_{XY} u_i | \neq 0 \}} |\nabla_{XY} u_i|^2 \left[ \mathcal A_i^2 + \mathcal B_i^2 \right] \zeta_i^2 
\\&& \label{ineq2}  -\sum_{i=1}^m \int_{\Omega } 2 \left[ ZYu_i Xu_i-ZXu_iYu_i   \right] \zeta_i^2 \\
 && \label{ineq3} +  \sum_{i\neq j=1}^m  \int_{\Omega }  \partial_j H_i(u) \left[ \langle  \nabla_{XY} u_i,\nabla_{XY} u_j\rangle_{XY}  \zeta_i^2 - |\nabla_{XY} u_i| |\nabla_{XY} u_j| \zeta_i \zeta_j  \right]
 \\&\le& \label{ineq4} \sum_{i=1}^m   \int_{\Omega }  |\nabla_{XY} u_i|^2  |\nabla_{XY} \zeta_i|^2 . 
\end{eqnarray} 
We then apply this inequality to establish the flatness of level sets for stable solutions of (\ref{mainL}) for various vector fields $X$ and $Y$. Unlike the classical De Giorgi type results, studied in \cite{fg,mf1,mf},  there is no bounded one-dimensional solution of the form $u_i(x,y)=f_i(ax+by)$ where $b\neq0$ and $u_i(x,y,z)=f_i(ax+by+cz)$ where $c\neq0$ for system (\ref{mainL}) with the Grushin and Heisenberg vector fields in two and three dimensions, respectively. This can be seen, as an example, by the following simple computations
\begin{equation}
(a^2+x^2b^2)f_i''(ax+by) = H_i(f_i(ax+by)), 
\end{equation}
where the left-hand side is not bounded and the right-hand side is bounded unless $f_i''\equiv0$ that implies a trivial case. In this regard, we prove that under certain assumptions, stable solutions of (\ref{mainL}) with the Grushin vector fields in two dimensions only depend on the $x$-variable. 

Let us end this section with pointing out that most of the results presented in this article can be generalized to multidimensional vector fields and/or fiber nonlinearities $H_i(x,u)$,  and for the sake of convenience of the readers we restrict ourselves to $X$ and $Y$ and nonlinearities $H_i(u)$.    Here is how this article is structured. In Section \ref{secpre},  we provide some basic information in regards to sub-Riemannian manifolds and hypoelliptic operators.   In Section \ref{secin}, we derive a stability inequality for stable solutions of system (\ref{mainL}). Then, we apply it to establish a geometric Poincar\'{e} inequality (\ref{ineq1})-(\ref{ineq4}). In addition, 
we provide a Hamiltonian identity for system (\ref{mainL}) with the Grushin operator. In Section \ref{secsym}, we apply the geometric Poincar\'{e} inequality to prove one-dimensional symmetry results and to analyze the level sets of stable solutions. 

\section{Preliminaries} \label{secpre}
A sub-Riemannian manifold is denoted by $(\mathbb M, \mathcal D, g)$  where $\mathbb M$ stands for a smooth
connected manifold, $\mathcal D$ is a smooth nonholonomic distribution  on $\mathbb M$ when $\mathcal D\subset T  \mathbb M$, and $g$ is a metric on $ \mathbb M$.     The geometry is defined on a manifold $ \mathbb M$, on which every trajectory evolves tangent to a distribution $\mathcal D$ of the tangent bundle $T \mathbb M$. Such trajectories are called horizontal curves. Riemannian geometry is the special case in which $\mathcal D = T \mathbb M$.  A metric, known as the sub-Riemannian metric, is defined as an inner product on the distribution.   We refer interested readers to \cite{cc,mont,br,gro} for more information.

A curve $\gamma:[0,1] \to \mathbb M$ is called a horizontal path with respect to $\mathcal D$ if it belongs to $W^{1,2}([0,1], \mathbb M)$ and satisfies 
\begin{equation}
\dot{\gamma}(t) \in \mathcal D(\gamma(t)) \ \ \text{for a.e.} \ \ t\in[0,1].
\end{equation}
The length of a path is defined by 
\begin{equation}
\text{length}_{g}(\gamma) := \int_0^1 \sqrt{ g_{\gamma(t)} (\dot{\gamma}(t), \dot{\gamma}(t))} dt. 
\end{equation}
The sub-Riemannian distance $d(x,y)$ between two points $x,y\in\mathbb M$ is the infimum over the lengths of the  horizontal paths between $x$ and $y$. This is also known as the Carnot-Charath\'{e}odory distance.

The Chow-Rashevsky theorem,  developed independently by Chow \cite{chow} and Rashevsky \cite{ras}, states that any two points of $\mathbb M$ can be joined by a horizontal path, if the Chow's conditions holds that is the vector fields $X$ and $Y$ and their iterated brackets span the tangent space $T_x \mathbb M$ at every point $\mathbb M$.  In other words, for any $x,y\in\mathbb M$ there exists a horizontal path $\gamma:[0,1]\to\mathbb M$ such that $\gamma(0)=x$ and $\gamma(1)=y$.    The Chow's condition is also known under the name of Lie algebra rank condition  since it states that the rank at every point $x$ of the Lie algebra generated by the $X$ and $Y$ is full. In the context of PDEs, it is known under the name of Hormander's condition that when it holds, the differential operator $\Delta_{XY}=X^2 + Y^2$ is hypoelliptic,  see Hormander's theorem \cite{h3}. Conversely, when $\mathbb M$ and the vector fields $X$ and $Y$ are analytic, the hypoellipticity of $\Delta_{XY}$ implies the Chow's condition. 
 More precisely, the H\"{o}rmander bracket condition
 states that 
\begin{equation}
T_m \mathbb M = \span(\{ \mathcal X(m); \ \mathcal X \in \mathcal L\}) \ \ \text{for all} \ m\in\mathbb M,
  \end{equation}  
when $\mathcal L$ is the Lie algebra of vector fields generated by the collection $\{ X,Y\}$.  The term comes from the analysis literature; it is named after H\"{o}rmander who obtained hypoellipticity results for linear operators associated with families of vector fields.  It is also referred as the bracket generating condition by Montgomery in \cite{mont} and as nonholonomic vector fields by Bella\"{i}che in \cite{br}.

 One of the simplest examples of the sub-Riemannian geometry is the  three-dimensional  Heisenberg geometry.   The Heisenberg algebra denoted by $\mathbb H$  is the three-dimensional Lie algebra with basis $\{X, Y, Z\}$ and with the only nonzero bracket between the basis elements being $[X, Y ] = Z$. Vector fields  $X, Y, Z$ are left-invariant vector fields on the corresponding simply connected Lie group $\mathbb H$ that is diffeomorphic to $\mathbb R^3$.  The underlying manifold of this Lie group is simply $\mathbb R^3$ with the non-commutative group
law 
\begin{equation}
(x,y,z)(x',y',z') = \left(x+x',y+y',z+z'+\frac{1}{2} (xy'-yx') \right) , 
\end{equation}
for $(x,y,z)$ and $(x',y',z') $ in $\mathbb R^3$. We define the distribution $\mathcal D$ on $\mathbb H$ to be the span of $X$ and $Y$  which we declare to be orthonormal. We refer interested readers to \cite{cdpt,fs,Mil,rock} and references therein.

 The next example for the sub-Riemannian, at least along some singular line, is the  Grusin plane.  The Grushin plane  is $\mathbb R^2$ endowed with the vector fields $X$ and $Y$ in (\ref{GrXY}). These vector fields span the tangent space everywhere, except along the line $x = 0$, where adding $Z=[X,Y]$ is needed.   The Grushin metric outside the critical line $x = 0$, the sub-Riemannian metric is in fact Riemannian, and is equal to
 \begin{equation}
g=dx^2+\frac{dy^2}{x^2}. 
\end{equation} 
 The metric can be extended continuously across the critical line $x = 0$ as a Carnot-Charath\'{e}odory metric, since $Z=[X,Y]=\partial_y \neq 0$. Any path has finite length, provided its tangent is parallel to the $x$-axis when crossing the $y$-axis. We refer interested readers to \cite{beck,mm,gr,fv2,rock} and references therein. 
 
 We end this section with a technical lemma regarding the commutation of vector fields $X$ and $Y$ and the operator $\Delta_{XY} $ given by system (\ref{mainL}).  

\begin{lemma}\label{lemtech}  
  Let $u=(u_i)_{i=1}^m$ be a solution of system (\ref{mainL}).  Then,  
\begin{equation}
\Delta_{XY} Xu_i + 2Z Yu_i   =\sum_{j=1}^m \partial_j H_i(u) X u_j \ \  \text{and} \ \ 
\Delta_{XY}  Yu_i - 2Z Xu_i   = \sum_{j=1}^m \partial_j H_i(u) Y u_j , 
\end{equation}  
where $Z=[X,Y]$. 
\end{lemma}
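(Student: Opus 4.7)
The plan is to commute the vector fields through the H\"ormander operator using the bracket identity $[X,Y] = Z$, together with the fact that in the ambient sub-Riemannian setting one has $[X,Z] = [Y,Z] = 0$ (as noted in the excerpt for both the Grushin and Heisenberg vector fields). I will carry out the computation for the first identity in detail; the second follows by an entirely symmetric argument in which the roles of $X$ and $Y$ are swapped.

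First, I would apply $X$ to both sides of $\Delta_{XY} u_i = H_i(u_1,\dots,u_m)$. By the chain rule,
\begin{equation*}
X \Delta_{XY} u_i \;=\; X H_i(u) \;=\; \sum_{j=1}^m \partial_j H_i(u)\, X u_j.
\end{equation*}
The task then reduces to showing
\begin{equation*}
\Delta_{XY}(Xu_i) - X \Delta_{XY} u_i \;=\; -2 Z Y u_i.
\end{equation*}
Expanding both sides using $\Delta_{XY} = X X + Y Y$, the $XXX u_i$ terms cancel and one is left with $Y Y X u_i - X Y Y u_i$, so the whole problem is the purely algebraic identity $YYX - XYY = -2ZY$ on $C^3$ functions.

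To establish this, I would push the $X$ through two $Y$'s one step at a time using $YX = XY - Z$. Then
\begin{equation*}
YYX u_i \;=\; Y(XY - Z)u_i \;=\; YXY u_i - YZ u_i \;=\; (XY - Z) Y u_i - YZ u_i \;=\; XYY u_i - ZY u_i - YZ u_i.
\end{equation*}
At this point I invoke $[Y,Z] = 0$ to replace $YZ u_i$ by $ZY u_i$, which collapses the right-hand side to $XYY u_i - 2ZY u_i$. Rearranging yields precisely $YYX - XYY = -2ZY$, proving the first identity. The second identity is obtained by the mirror calculation: apply $Y$ to $\Delta_{XY} u_i = H_i(u)$, observe that $\Delta_{XY}(Yu_i) - Y\Delta_{XY} u_i = XXY u_i - YXX u_i$, and use $XY = YX + Z$ twice, together with $[X,Z] = 0$, to obtain $XXY - YXX = 2ZX$.

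The only conceptual point to be careful about is the commutativity $[X,Z] = [Y,Z] = 0$; without it one would only get $YYX - XYY = -ZY - YZ$ and no clean identity. This is implicit in the sub-Riemannian frameworks (Grushin, Heisenberg, and their multidimensional extensions) emphasized throughout the paper, so the algebraic manipulations above go through verbatim in those settings and no further analytic input (elliptic regularity, etc.) is required beyond having $u_i \in C^3$ so that all third-order derivatives are defined and mixed partials in the bracket expansions may be freely rearranged.
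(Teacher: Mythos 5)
Your proof is correct and follows the same route as the paper: apply $X$ (resp.\ $Y$) to the equation and verify the commutation identity $X\Delta_{XY}=\Delta_{XY}X+2ZY$ (resp.\ $Y\Delta_{XY}=\Delta_{XY}Y-2ZX$), which the paper dispatches as a ``straightforward computation.'' Your explicit remark that passing from $YZ+ZY$ to $2ZY$ requires $[Y,Z]=0$ (and likewise $[X,Z]=0$) is a worthwhile point of care the paper leaves implicit: the hypothesis holds for the Grushin and Heisenberg fields but not for arbitrary H\"{o}rmander systems (e.g.\ the Martinet fields mentioned in Section~3, where $[X,Z]\neq 0$), so the lemma as stated for ``general'' vector fields tacitly assumes this step-two condition.
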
  
\begin{proof} 
From (\ref{mainL}),  we get  
\begin{equation}\label{XDH}
X\Delta_{XY} u_i =  \sum_{j=1}^m \partial_j H_i(u) X u_j  \ \ \text{and} \ \  Y\Delta_{XY} u_i =  \sum_{j=1}^m \partial_j H_i(u) Y u_j    . 
\end{equation}
Straightforward computations show that 
\begin{equation}\label{XD}
X \Delta_{XY} u_i  =  \Delta_{XY} X u_i + 2ZY u_i \ \ \text{and} \ \ 
Y \Delta_{XY} u_i  =  \Delta_{XY} Y u_i - 2ZX u_i. 
\end{equation}
Combining (\ref{XDH}) and (\ref{XD}) completes the proof. 

\end{proof}

 Although the setting of this article is mostly the hypoelliptic calculus on Heisenberg groups and Grushin planes, we expect that most of our results can be extended
to more general settings such as the hypoelliptic calculus on Carnot-Carath\'{e}odory
manifolds. 


\section{Inequalities for stable solutions and a Hamiltonian identity} \label{secin}

In this section, we provide a stability inequality for pointwise-stable solutions of symmetric system (\ref{mainL}). Then, we apply this stability inequality to establish the geometric Poincar\'{e} inequality (\ref{ineq1})-(\ref{ineq4}) for general vector fields $X$ and $Y$ that is not limited to the Heisenberg vector fields (\ref{HeisXY}), Grushin vector fields (\ref{GrXY}) and Martinet vector fields 
\begin{equation}
X=\partial_x \ \ \text{and} \ \ \ Y=\partial_y +\frac{x^2}{2} \partial_z.
\end{equation}
Throughout this article, we call $u=(u_i)_{i=1}^m$ a stable solution of (\ref{mainL}) if it is a solution of (\ref{mainL}) and it satisfies the stability inequality (\ref{stability}). 

\begin{lemma}\label{lemstable}  
  Let $u$ be a pointwise-stable solution of symmetric system (\ref{mainL}).  Then 
\begin{equation} \label{stability}
-\sum_{i,j=1}^{m} \int_{\Omega} \partial_i H_j(u) \zeta_i \zeta_j  \le \sum_{i=1}^{m} \int_{\Omega}  \langle \nabla_{XY} \zeta_i, \nabla_{XY} \zeta_i\rangle_{XY}  , 
\end{equation} 
for any $\zeta=(\zeta_i)_{i=1}^m$ where $ \zeta_i \in C_c^1( \Omega )$ for $1\le i\le m$. 
\end{lemma}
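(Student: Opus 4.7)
The plan is to use the linearization furnished by pointwise-stability and to play it against a test function of the form $\zeta_i^2/\phi_i$, in the spirit of the classical scalar argument of Farina--Sciunzi--Valdinoci; the novelty here is that one must exploit both the symmetry $\partial_j H_i(u) = \partial_i H_j(u)$ and the sign assumption $\partial_j H_i(u)\phi_i\phi_j<0$ simultaneously in order to handle the off-diagonal coupling of the system.

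First, since each $\phi_i$ has a definite sign, I set $\psi_i := \zeta_i/\phi_i$, so that $\zeta_i = \psi_i\phi_i$. Testing the linearized equation \eqref{sta} against $\psi_i^2\phi_i$ and integrating over $\Omega$, then integrating by parts (legitimate because the ambient vector fields $X,Y$ of interest are divergence-free and hence $\Delta_{XY}$ is formally self-adjoint), I expect to obtain, after expanding $|\nabla_{XY}(\psi_i\phi_i)|^2$ by Leibniz,
\begin{equation*}
\int_\Omega \phi_i^2\,|\nabla_{XY}\psi_i|^2 \;=\; \int_\Omega |\nabla_{XY}\zeta_i|^2 \;+\; \sum_{j=1}^m \int_\Omega \partial_j H_i(u)\,\phi_i\phi_j\,\psi_i^2.
\end{equation*}
Dropping the nonnegative left-hand side and summing over $i$ yields
\begin{equation*}
0 \;\le\; \sum_{i=1}^m \int_\Omega |\nabla_{XY}\zeta_i|^2 \;+\; \sum_{i=1}^m \int_\Omega \partial_i H_i(u)\,\zeta_i^2 \;+\; \sum_{i\neq j} \int_\Omega \partial_j H_i(u)\,\phi_i\phi_j\,\psi_i^2,
\end{equation*}
which is the crude stability estimate before the off-diagonal terms are upgraded.

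Next I would convert the mixed terms into the desired quadratic form. Setting $a_{ij}(x) := -\partial_j H_i(u(x))\,\phi_i(x)\phi_j(x)$, the pointwise-stability sign hypothesis gives $a_{ij}\ge 0$ for $i\neq j$, while the symmetry of the system gives $a_{ij}=a_{ji}$. The elementary pointwise identity
\begin{equation*}
\sum_{i\neq j} a_{ij}\,\psi_i^2 \;-\; \sum_{i\neq j} a_{ij}\,\psi_i\psi_j \;=\; \tfrac{1}{2}\sum_{i\neq j} a_{ij}(\psi_i-\psi_j)^2 \;\ge\; 0
\end{equation*}
then allows replacement of $\sum_{i\neq j}\partial_j H_i\,\phi_i\phi_j\psi_i^2$ by the larger quantity $\sum_{i\neq j}\partial_j H_i\,\zeta_i\zeta_j$; once recombined with the diagonal term $\sum_i \partial_i H_i\,\zeta_i^2$ and the symmetry $\partial_j H_i = \partial_i H_j$ is invoked to symmetrize the full double sum, this produces exactly \eqref{stability}.

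The main obstacle I anticipate is the legitimacy of dividing by $\phi_i$: since we are only told that $\phi_i$ does not change sign, it may vanish, and $\psi_i := \zeta_i/\phi_i$ need not be an admissible test function. The standard fix is to work with the regularized surrogate $\psi_i^\varepsilon := \zeta_i\,\phi_i/(\phi_i^2+\varepsilon^2)$, perform the computation above with $\psi_i^\varepsilon$ in place of $\psi_i$, and pass to the limit $\varepsilon \to 0^+$; the $C^3$-regularity of $\phi_i$ together with the compact support and $C^1$-regularity of $\zeta_i$ make the dominated-convergence bookkeeping routine, but this regularization must be carried out explicitly before the sign/symmetry manipulation of the off-diagonal terms can be invoked.
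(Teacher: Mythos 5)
Your proposal is correct and follows essentially the same route as the paper: you test the linearized system against $\zeta_i^2/\phi_i$, and your exact ``ground-state'' identity followed by discarding the nonnegative term $\int\phi_i^2|\nabla_{XY}\psi_i|^2$ is precisely the paper's Cauchy--Schwarz step, while your symmetrization $\tfrac12\sum_{i\neq j}a_{ij}(\psi_i-\psi_j)^2\ge 0$ is the same as the paper's use of $\zeta_i^2/\phi_i^2+\zeta_j^2/\phi_j^2\ge 2\zeta_i\zeta_j/(\phi_i\phi_j)$ combined with the sign condition and the symmetry of $\mathcal H$. Your regularization to handle possible zeros of $\phi_i$ is a point the paper silently skips, so that remark is a refinement rather than a divergence.
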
  
\begin{proof}   Since $u=(u_i)_{i=1}^m$ is a stable solution, there exists a  sequence $\phi=(\phi_i)_{i=1}^m$  such that for all $i=1,\cdots, m$  
\begin{equation}\label{linearzeta}
\Delta_{XY}  \phi_i =  \sum_{j=1}^{m} \partial_j H_i(u) \phi_j  . 
\end{equation}
Consider test function  $\zeta=(\zeta_i)_{i=1}^m$ where $ \zeta_i \in C_c^1(\Omega)$ for $1\le i\le m$.  Multiplying  both sides of (\ref{linearzeta})  with $\frac{\zeta_i^2}{\phi_i}$ and integrating, for each $i=1,\cdots,m$ we get 
\begin{equation}\label{ineq}
-\sum_{j=1}^{m}  \int_{\Omega}  \partial_j H_i(u) \phi_j \frac{\zeta_i^2}{\phi_i}    = -\int_{\Omega}   \frac{\Delta_{XY}  \phi_i}{\phi_i} {\zeta_i^2}  . 
  \end{equation}
Applying the Cauchy-Schwarz inequality,  we conclude
 \begin{equation}
- \int_{\Omega}   \frac{\Delta_{XY}  \phi_i}{\phi_i} {\zeta_i^2}   \le \int_{\Omega}  \langle \nabla_{XY} \zeta_i, \nabla_{XY} \zeta_i \rangle_{XY}   . 
   \end{equation}
In above the following inequality, that is inspired by the Cauchy-Schwarz inequality, is applied 
\begin{equation}
 2  \langle \frac{\nabla_{XY} \phi_i}{\phi_i } , \nabla_{XY} \zeta_i \rangle _{XY}   \zeta_i - \langle \frac{\nabla_{XY} \phi_i}{\phi_i},  \frac{\nabla_{XY} \phi_i}{\phi_i} \rangle _{XY}  \zeta_i^2 \le \langle \nabla_{XY}\zeta_i,\nabla_{XY}\zeta_i   \rangle _{XY}  .
 \end{equation}
 Note also that here we have applied the divergence theorem for the Laplace-Beltrami operator. For the left-hand side of (\ref{ineq}), straightforward calculations show that  
\begin{eqnarray}
  \sum_{i,j=1}^{m}  \int_{\Omega}  \partial_j H_i(u) \phi_j \frac{\zeta_i^2}{\phi_i} &=&   \sum_{i<j}^{m}  \int_{\Omega}  \partial_j H_i(u) \phi_j \frac{\zeta_i^2}{\phi_i}
 + \sum_{i>j}^{n}  \int_{\Omega}  \partial_j H_i(u) \phi_j \frac{\zeta_i^2}{\phi_i} 
  +  \sum_{i=1}^{m}\int_{\Omega} \partial_i H_i(u) {\zeta_i^2}
   \\&=& \nonumber \sum_{i<j}^{m}   \int_{\Omega} \partial_j H_i(u) \phi_j \frac{\zeta_i^2}{\phi_i}  + \sum_{i<j}^{m}  \int_{\Omega}  \partial_i H_j(u) \phi_i \frac{\zeta_j^2}{\phi_j}  + \sum_{i=1}^{m} \int_{\Omega} \partial_i H_i(u) {\zeta_i^2}
  \\&=&\nonumber  \sum_{i<j}^{m}   \int_{\Omega}  \partial_i H_j(u) \phi_i\phi_j \left(   \frac{\zeta^2_i}{\phi^2_i}  +   \frac{\zeta^2_j}{\phi^2_j}  \right)  
 +  \sum_{i=1}^{m} \int_{\Omega}  \partial_i H_i(u) {\zeta_i^2}  
  \\&\le & \nonumber  2 \sum_{i<j}^{m}  \int_{\Omega}  {\partial_j H_i(u) } \zeta_i \zeta_j  +  \sum_{i=1}^{m} \int_{\Omega}  \partial_i H_i(u) {\zeta_i^2} 
  \\&=& \nonumber \sum_{i,j=1}^{m}   \int_{\Omega} {\partial_j H_i(u) } \zeta_i\zeta_j  ,
  \end{eqnarray}
where we have used $\partial_i H_j(u) \phi_i\phi_j<0$ and $   \frac{\zeta^2_i}{\phi^2_i}  +   \frac{\zeta^2_j}{\phi^2_j}  \ge 2\frac{\zeta_i\zeta_j}{\phi_i\phi_j}$.  This ends the proof. 

\end{proof}   

We now apply the stability inequality to prove a Poincar\'{e} type inequality. 

\begin{prop}\label{propine}  
  Let $u=(u_i)_{i=1}^m$ be a stable solution of symmetric system (\ref{mainL}) for general vector fields $X$ and $Y$.  Then,  
\begin{eqnarray}
&& \label{XYT1} 
 \sum_{i=1}^m \int_{\overline \Omega} \left[ |\nabla_{XY}X u_i|^2  +  |\nabla_{XY}Y u_i|^2  - | X| \nabla_{XY} u_i||^2 -| Y|\nabla_{XY} u_i||^2\right] \zeta_i^2
\\&& \label{XYT2}  -\sum_{i=1}^m   \int_{\Omega} 2 \left[ ZYu_i Xu_i-ZXu_iYu_i   \right] \zeta_i^2 
\\&& \label{XYT3} +  \sum_{i\neq j=1}^m  \int_{\Omega}  \partial_j H_i(u) \left[ \langle  \nabla_{XY} u_i, \nabla_{XY} u_j \rangle_{XY}   \zeta_i^2 - | \nabla_{XY} u_i| | \nabla_{XY} u_i| \zeta_i \zeta_j  \right]
\\&\le& \label{XYT4} \sum_{i=1}^m   \int_{\Omega}   |\nabla_{XY} u_i|^2  | \nabla_{XY} \zeta_i|^2, 
\end{eqnarray}  
where $\overline \Omega=\Omega \cap \{ | \nabla_{XY} u_i | \neq 0 \}$ and each $\zeta_i$ is a test function $\zeta_i\in C_c^2(\Omega)$. 
\end{prop}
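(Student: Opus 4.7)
The strategy mirrors the Sternberg--Zumbrun argument in its horizontal incarnation: apply the stability inequality of Lemma 3.1 to the test function $\phi_i=|\nabla_{XY}u_i|\zeta_i$ (after a standard regularization near its zero set), then combine the output with an integration-by-parts identity obtained by differentiating the system via Lemma 2.1, relying on the exact cancellation of the cross term $\langle\nabla_{XY}|\nabla_{XY}u_i|,\nabla_{XY}\zeta_i\rangle_{XY}$ that appears in both computations.

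First I would use Lemma 2.1. Multiplying $\Delta_{XY}Xu_i+2ZYu_i=\sum_j\partial_jH_i(u)Xu_j$ by $Xu_i\zeta_i^2$ and $\Delta_{XY}Yu_i-2ZXu_i=\sum_j\partial_jH_i(u)Yu_j$ by $Yu_i\zeta_i^2$, integrating, adding, and integrating by parts through the divergence form $\Delta_{XY}=\operatorname{div}_{XY}\nabla_{XY}$, I obtain an identity whose left-hand side is
\[
-\int_\Omega\bigl[|\nabla_{XY}Xu_i|^2+|\nabla_{XY}Yu_i|^2\bigr]\zeta_i^2 -2\int_\Omega\zeta_i\bigl\langle Xu_i\,\nabla_{XY}Xu_i+Yu_i\,\nabla_{XY}Yu_i,\nabla_{XY}\zeta_i\bigr\rangle_{XY}+2\int_\Omega[ZYu_i\,Xu_i-ZXu_i\,Yu_i]\zeta_i^2
\]
and whose right-hand side is $\sum_j\int_\Omega\partial_jH_i(u)\langle\nabla_{XY}u_i,\nabla_{XY}u_j\rangle_{XY}\zeta_i^2$. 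The key algebraic identity $Xu_i\,\nabla_{XY}Xu_i+Yu_i\,\nabla_{XY}Yu_i=\tfrac12\nabla_{XY}|\nabla_{XY}u_i|^2=|\nabla_{XY}u_i|\,\nabla_{XY}|\nabla_{XY}u_i|$ (valid where $|\nabla_{XY}u_i|>0$) converts the middle term into $-2\int\zeta_i|\nabla_{XY}u_i|\,\langle\nabla_{XY}|\nabla_{XY}u_i|,\nabla_{XY}\zeta_i\rangle_{XY}$.

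Second, plugging $\phi_i=|\nabla_{XY}u_i|\zeta_i$ into Lemma 3.1 and expanding $|\nabla_{XY}\phi_i|^2$ yields exactly three pieces, namely $\zeta_i^2\bigl[|X|\nabla_{XY}u_i||^2+|Y|\nabla_{XY}u_i||^2\bigr]$, $|\nabla_{XY}u_i|^2|\nabla_{XY}\zeta_i|^2$, and $2\zeta_i|\nabla_{XY}u_i|\,\langle\nabla_{XY}|\nabla_{XY}u_i|,\nabla_{XY}\zeta_i\rangle_{XY}$. Summing the first-step identity in $i$ and subtracting it from the stability inequality makes the cross pieces cancel exactly; regrouping produces the desired bound, because the $i=j$ contribution in the double sum $\sum_{i,j}\partial_jH_i(u)[\langle\nabla_{XY}u_i,\nabla_{XY}u_j\rangle_{XY}\zeta_i^2-|\nabla_{XY}u_i||\nabla_{XY}u_j|\zeta_i\zeta_j]$ vanishes identically. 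Replacing $\Omega$ by $\overline\Omega=\Omega\cap\{|\nabla_{XY}u_i|\neq 0\}$ in the second-order integrand is then justified by the abstract identity (\ref{identity1}), whose right-hand side vanishes a.e.\ on $\{|\nabla_{XY}u_i|=0\}$.

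The main obstacle is the lack of smoothness of $|\nabla_{XY}u_i|$ on its zero set: $\phi_i=|\nabla_{XY}u_i|\zeta_i$ is only Lipschitz, so Lemma 3.1 must first be applied to the regularization $\phi_i^\varepsilon=\bigl(\sqrt{|\nabla_{XY}u_i|^2+\varepsilon}-\sqrt{\varepsilon}\bigr)\zeta_i$, after which one passes $\varepsilon\downarrow 0$ by dominated convergence; identity (\ref{identity1}) ensures the second-order contributions concentrate on $\{|\nabla_{XY}u_i|\neq 0\}$, so no spurious pieces survive at the critical set. Apart from this analytic care, everything reduces to symbolic manipulation once Lemma 2.1 and the divergence-form structure of $\Delta_{XY}$ are in hand.
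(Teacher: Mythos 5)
Your proposal is correct and follows essentially the same route as the paper's proof: differentiate the system via Lemma \ref{lemtech}, test against $Xu_i\zeta_i^2$ and $Yu_i\zeta_i^2$, insert $|\nabla_{XY}u_i|\zeta_i$ into the stability inequality of Lemma \ref{lemstable}, and observe that the first-order cross terms (the paper's (\ref{Summ4})--(\ref{Summ5}) versus (\ref{Sum3})--(\ref{Sum4})) cancel identically. The only addition is your explicit regularization of $|\nabla_{XY}u_i|$ near its zero set, a standard point the paper passes over silently.
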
 
 
\begin{proof} Multiplying equations given in Lemma \ref{lemtech} by  $X u_i \zeta_i^2$ and $ Y u_i \zeta_i^2$,  we get  
\begin{eqnarray}\label{XXD}
 \sum_{j=1}^m \partial_j H_i(u) X u_j  X u_i \zeta_i^2 &=&  XX (X u_i) X u_i \zeta_i^2  +YY (X u_i)   X u_i \zeta_i^2 + 2Z Yu_i  X u_i \zeta_i^2  , \\
 \sum_{j=1}^m \partial_j H_i(u) Yu_j  Y u_i \zeta_i^2  &=& XX (Y u_i)  Y u_i \zeta_i^2+YY (Y u_i) Y u_i \zeta_i^2 
 - 2Z Xu_i  Y u_i \zeta_i^2    . 
\end{eqnarray}
Integrating by parts yields  
\begin{eqnarray}\label{HijXX}
 \sum_{j=1}^m \int_{\Omega}    \partial_j H_i(u) X u_j  X u_i \zeta_i^2&=& -\int_{\Omega} (XX u_i)\left[ (X X u_i) \zeta_i^2  +Xu_i X \zeta^2_i \right] 
\\&& \nonumber -\int_{\Omega} (YX u_i) \left[(Y X u_i) \zeta_i^2  +Xu_i Y \zeta^2_i\right] 
 + 2   \int_{\Omega}Z Yu_i  X u_i \zeta_i^2   ,
\end{eqnarray}
and similarly, 
\begin{eqnarray}\label{HijYY}
 \sum_{j=1}^m \int_{\Omega}   \partial_j H_i(u) Y u_j  Y u_i \zeta_i^2&=& -\int_{\Omega} (XY u_i) \left[(X Y u_i) \zeta_i^2  +Yu_i X \zeta^2_i \right]
\\&& \nonumber -\int_{\Omega} (YY u_i) \left[(Y Y u_i) \zeta_i^2  +Yu_i Y \zeta^2_i \right]
 - 2   \int_{\Omega}Z Xu_i  Y u_i \zeta_i^2   . 
\end{eqnarray}
Combining the above equations (\ref{HijXX}) and (\ref{HijYY}),  we get 
\begin{eqnarray}
 - \int_{\Omega}   \partial_i H_i(u) \left[ (X u_i)^2+  (Y u_i)^2\right] \zeta_i^2  
&=& \label{Summ1} \sum_{j\neq i}^m \int_{\Omega}   \partial_j H_i(u) \left(Y u_j  Y u_i + X u_j  X u_i  \right)\zeta_i^2
\\&&  \label{Summ2} +\int_{\Omega} \left[ (XX u_i)^2 +  (X Y u_i)^2 +(YX u_i)^2+(Y Y u_i)^2    \right] \zeta_i^2
\\&&  \label{Summ3} -2 \int_{\Omega} \left[ Z Yu_i  X u_i  -Z Xu_i  Y u_i \right]\zeta_i^2  
\\&& \label{Summ4} +\int_{\Omega} (XX u_i)(X u_i) (X \zeta_i^2) + (YX u_i)(X u_i) (Y \zeta_i^2) 
\\&& \label{Summ5}  +\int_{\Omega} (XY u_i)(Y u_i) (X \zeta_i^2) +(YY u_i)(Y u_i) (Y \zeta_i^2).  
\end{eqnarray}
We now apply the stability inequality (\ref{stability}) for the test functions $\zeta_i$ replaced by $|\nabla_{XY} u_i|\zeta_i$
\begin{equation} \label{}
-\sum_{i,j=1}^{m} \int_{\Omega} \partial_i H_j(u) |\nabla_{XY} u_i| |\nabla_{XY} u_j| \zeta_i \zeta_j  \le \sum_{i=1}^{m} \int_{\Omega}  \langle  |\nabla_{XY} u_i| \zeta_i, |\nabla_{XY} u_i| \zeta_i\rangle_{XY}  .
\end{equation} 
Rearranging the terms we obtain 
\begin{eqnarray} \label{sumOmegaXY}
&&-\sum_{i=1}^{m} \int_{\Omega} \partial_i H_i(u) |\nabla_{XY} u_i|^2 \zeta_i^2  -\sum_{i\neq j=1}^{m} \int_{\Omega} \partial_i H_j(u) |\nabla_{XY} u_i| |\nabla_{XY} u_j| \zeta_i \zeta_j 
\\& \le&  \nonumber  \sum_{i=1}^{m} \int_{\Omega}    |X(|\nabla_{XY} u_i| \zeta_i)|^2 +   |Y(|\nabla_{XY} u_i| \zeta_i)|^2    . 
\end{eqnarray} 
Straightforward computations show that 
\begin{eqnarray}\label{XnablaXX}
 |X(|\nabla_{XY} u_i| \zeta_i)|^2 &= & |\nabla_{XY} u_i|^{-1} \left[ (Xu_i)(XXu_i)+(Yu_i)(XYu_i)\right]^2 \zeta_i^2 
\\&& \nonumber   +2 \left[ (Xu_i)(XXu_i)+(Yu_i)(XYu_i)\right]  \zeta_i X\zeta_i 
+ |\nabla_{XY} u_i|^{2} (X \zeta_i)^2.  
\end{eqnarray}
Similarly, 
\begin{eqnarray}\label{YnablaXX}
 |Y(|\nabla_{XY} u_i| \zeta_i)|^2 &= & |\nabla_{XY} u_i|^{-1} \left[ (Xu_i)(YXu_i)+(Yu_i)(YYu_i)\right]^2 \zeta_i^2 
\\&& \nonumber  +2 \left[ (Xu_i)(YXu_i)+(Yu_i)(YYu_i)\right]  \zeta_i Y\zeta_i
+  |\nabla_{XY} u_i|^{2} (Y \zeta_i)^2.  
\end{eqnarray}
Combining (\ref{XnablaXX}), (\ref{YnablaXX}) and (\ref{sumOmegaXY}) we conclude
\begin{eqnarray} 
-\sum_{i=1}^{m} \int_{\Omega} \partial_i H_i(u) |\nabla_{XY} u_i|^2 \zeta_i^2 
&\le& \label{Sum1} \sum_{i\neq j=1}^{m} \int_{\Omega} \partial_i H_j(u) |\nabla_{XY} u_i| |\nabla_{XY} u_j| \zeta_i \zeta_j 
\\& & \label{Sum2}  +  \sum_{i=1}^{m} \int_{\Omega}  \left[|X|\nabla_{XY} u_i||^2  +   |Y|\nabla_{XY} u_i||^2 \right]  \zeta_i^2
\\&& + \sum_{i=1}^{m} \int_{\Omega} |\nabla_{XY} u_i|^{2} \left[  (X \zeta_i)^2 +   (Y \zeta_i)^2 \right]
\\&&\label{Sum3}  +  \sum_{i=1}^{m} \int_{\Omega}  \left[ (Xu_i)(YXu_i)+(Yu_i)(YYu_i)\right]   Y\zeta_i^2
\\&&\label{Sum4} +   \sum_{i=1}^{m} \int_{\Omega} \left[ (Xu_i)(XXu_i)+(Yu_i)(XYu_i)\right]  X\zeta^2_i . 
\end{eqnarray} 
We now combine the equality (\ref{Summ1})-(\ref{Summ5}) and the inequality (\ref{Sum1})-(\ref{Sum4}). Notice that each term in (\ref{Sum3})-(\ref{Sum4}) and in (\ref{Summ4})-(\ref{Summ5}) are identical. This implies that 
 \begin{eqnarray} \label{Sumf0}
&&\sum_{i=1}^m  \int_{\Omega} \left[ (XX u_i)^2 +  (X Y u_i)^2 +(YX u_i)^2+(Y Y u_i)^2 \right]\zeta_i^2
\\&&-  \sum_{i=1}^m  \int_{\Omega}  \left[ | X| \nabla_{XY} u_i||^2 + | Y|\nabla_{XY} u_i||^2   \right] \zeta_i^2
\\&&-2 \sum_{i=1}^m  \int_{\Omega} \left[ Z Yu_i  X u_i  -Z Xu_i  Y u_i \right]\zeta_i^2 
\\&&+\sum_{j\neq i}^m \int_{\Omega}   \partial_j H_i(u) \left[ \left(Y u_j  Y u_i + X u_j  X u_i \right)\zeta_i^2 -   |\nabla_{XY} u_i| |\nabla_{XY} u_j| \zeta_i \zeta_j \right] 
\\&\le& 
\sum_{i=1}^m   \int_{\Omega}   |\nabla_{XY} u_i|^2  | \nabla_{XY} \zeta_i|^2. 
\end{eqnarray} 
Note that $Y u_j  Y u_i + X u_j  X u_i = \langle  \nabla_{XY} u_i,\nabla_{XY} u_j\rangle_{XY} $.  This completes the proof. 

\end{proof}   

In what follows,  we show that all the terms in (\ref{XYT1}), i.e. 
\begin{equation}\label{XYidentity}
 \left[ |\nabla_{XY}X u_i|^2  +  |\nabla_{XY}Y u_i|^2  - | X| \nabla_{XY} u_i||^2 -| Y|\nabla_{XY} u_i||^2\right] , 
\end{equation} 
has a geometric representation.  In the Euclidean sense, Sternberg and Zumbrun in \cite{sz,sz1} derived the  geometric identity (\ref{identity}) between the tangential gradients and curvatures in the Euclidean sense. Inspired by this identity, we provide an identity for all terms in (\ref{XYidentity}). This enables us to conclude that (\ref{XYT1}) has a  positive fixed sign and this proves the geometric Poincar\'{e} type inequality (\ref{ineq1})-(\ref{ineq4}).   
     
\begin{thm}\label{thmine}  
  Let $u=(u_i)_{i=1}^m$ be a stable solution of symmetric system (\ref{mainL}).  Then, the geometric Poincar\'{e} type inequality (\ref{ineq1})-(\ref{ineq4}) holds. 
\end{thm}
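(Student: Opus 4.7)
The strategy is to combine the Poincar\'{e}-type inequality already produced by Proposition \ref{propine} with the pointwise geometric identity (\ref{identity1}). More precisely, starting from (\ref{XYT1})--(\ref{XYT4}), the only gap between what Proposition \ref{propine} asserts and the desired inequality (\ref{ineq1})--(\ref{ineq4}) is the identification
\begin{equation*}
|\nabla_{XY}X u_i|^2+|\nabla_{XY}Y u_i|^2-|X|\nabla_{XY} u_i||^2-|Y|\nabla_{XY} u_i||^2
\;=\;|\nabla_{XY} u_i|^2\bigl[\mathcal A_i^2+\mathcal B_i^2\bigr]
\end{equation*}
on $\{|\nabla_{XY} u_i|\neq 0\}$, which is exactly the content of (\ref{identity1}) applied with $w=u_i$. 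Therefore the whole proof reduces to establishing the identity (\ref{identity1}) for an arbitrary $w\in C^2(\Omega)$.

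The core of the argument is a direct algebraic computation on the open set $\{|\nabla_{XY} w|>0\}$. Introduce the shorthand $p=Xw$, $q=Yw$ and $a=XXw$, $b=XYw$, $c=YXw$, $d=YYw$, so $|\nabla_{XY} w|^2=p^2+q^2$ and $Zw=[X,Y]w=b-c$. Differentiating $|\nabla_{XY} w|=\sqrt{p^2+q^2}$ gives
\begin{equation*}
X|\nabla_{XY} w|=\frac{pa+qb}{|\nabla_{XY} w|},\qquad Y|\nabla_{XY} w|=\frac{pc+qd}{|\nabla_{XY} w|},
\end{equation*}
and a short manipulation identifies the left-hand side of (\ref{identity1}) with $[(pb-qa)^2+(pd-qc)^2]/(p^2+q^2)$. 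For the right-hand side, I would plug the definitions of $\eta$, $\tau$ and $H_{ess}w$ into $\mathcal A=\div_{XY}\eta$ and $\mathcal B=|\nabla_{XY} w|^{-1}[-Zw+\langle H_{ess}w\,\tau,\eta\rangle]$ to obtain
\begin{equation*}
|\nabla_{XY} w|^{3}\mathcal A=p^{2}d+q^{2}a-pq(b+c),\qquad
|\nabla_{XY} w|^{3}\mathcal B=-bp^{2}+cq^{2}+pq(a-d),
\end{equation*}
where the term $-Zw=-(b-c)$ conspires with the non-symmetry of $H_{ess}w$ so that the cross term $(b+c)$ in $\mathcal A$ becomes $(a-d)$ in $\mathcal B$. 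Squaring, summing, and collecting in powers of $p$ and $q$ leaves the same polynomial
\begin{equation*}
p^{4}(b^{2}+d^{2})+q^{4}(a^{2}+c^{2})+p^{2}q^{2}(a^{2}+b^{2}+c^{2}+d^{2})-2(ab+cd)pq(p^{2}+q^{2}),
\end{equation*}
which is precisely $(p^{2}+q^{2})\bigl[(pb-qa)^{2}+(pd-qc)^{2}\bigr]$, namely $|\nabla_{XY} w|^{4}$ times the left-hand side. Dividing by $|\nabla_{XY} w|^{4}$ yields the identity on $\{|\nabla_{XY} w|>0\}$.

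For the complementary set $\{|\nabla_{XY} w|=0\}$, I would invoke a Stampacchia-type argument: on the level set where the vector field $Xw\,X+Yw\,Y$ vanishes, each of $Xw$ and $Yw$ is zero, and hence a.e.\ on this set all of $X(Xw)$, $Y(Xw)$, $X(Yw)$, $Y(Yw)$ and $X|\nabla_{XY} w|$, $Y|\nabla_{XY} w|$ vanish, so that the left-hand side of (\ref{identity1}) is zero a.e.\ there. Plugging the resulting pointwise identity into the integrand of (\ref{XYT1}), the integration domain $\overline\Omega=\Omega\cap\{|\nabla_{XY} u_i|\neq 0\}$ produced by Proposition \ref{propine} is exactly the set on which $|\nabla_{XY} u_i|^2[\mathcal A_i^2+\mathcal B_i^2]$ is defined, and the remaining terms (\ref{XYT2})--(\ref{XYT4}) become (\ref{ineq2})--(\ref{ineq4}) without modification. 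This yields (\ref{ineq1})--(\ref{ineq4}) and completes the proof.

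The main obstacle is the algebraic verification of (\ref{identity1}): unlike in the Riemannian setting of \cite{sz,sz1}, the Hessian $H_{ess}w$ is not symmetric because $X$ and $Y$ do not commute, so the cross-derivative terms $b$ and $c$ appear asymmetrically. The delicate point is that the Lie-bracket correction $-Zw=-(b-c)$ hidden in $\mathcal B$ is precisely what is needed to cancel the non-symmetric contributions and match the clean expression coming from the direct expansion of the left-hand side; it is essentially this cancellation that forces the particular form of $\mathcal B$ in the statement.
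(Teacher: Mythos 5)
Your proposal is correct and follows essentially the same route as the paper: invoke Proposition \ref{propine} and then verify the pointwise identity (\ref{identity1}) by computing $|\nabla_{XY}u_i|^{3}\mathcal A_i$ and $|\nabla_{XY}u_i|^{3}\mathcal B_i$ explicitly and matching the resulting polynomial in $p,q,a,b,c,d$ against the expansion of the left-hand side (the paper packages the same algebra as a single Lagrange-type identity in $a,b,c,d,\epsilon,\delta$, but the computation is the same). Your formulas for $\mathcal A$, $\mathcal B$ and the common quartic polynomial all check out, and the treatment of $\{|\nabla_{XY}w|=0\}$ matches the paper's.
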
  

\begin{proof}  Proposition \ref{propine} implies that the inequality (\ref{XYT1})-(\ref{XYT4}) holds. With loss of generality we suppose that $|\nabla_{XY} u_i |\neq 0 $. Note that on $\{|\nabla_{XY} u_i|= 0\}$ we have $ Z Yu_i  X u_i  -Z Xu_i  Y u_i =0$ and  $\nabla_{XY} Xu_i=\nabla_{XY} Yu_i=0$ a.e. for each index $i$.  It is straightforward to compute  
\begin{eqnarray}
| X| \nabla_{XY} u_i||^2  &=& \frac{1}{(Xu_i)^2 + (Yu_i)^2} \left[ (Xu_i) (XXu_i) +(Yu_i) (XYu_i) \right]^2,\\
| Y| \nabla_{XY} u_i||^2  &=& \frac{1}{(Xu_i)^2 + (Yu_i)^2} \left[ (Xu_i) (YXu_i) +(Yu_i) (YYu_i) \right]^2. 
\end{eqnarray}
Applying the above equalities,  doing the expansion in the right-hand side and collecting like terms,   one can get  
\begin{eqnarray}
\mathcal C_i&:=& (XX u_i)^2 +  (X Y u_i)^2 +(YX u_i)^2+(Y Y u_i)^2 - | X| \nabla_{XY} u_i||^2 -| Y|\nabla_{XY} u_i||^2
\\&=& \nonumber \frac{1}{(Xu_i)^2 + (Yu_i)^2} \left[ (Yu_i)(XXu_i) - (Xu_i)(XYu_i) \right]^2 
\\&&\nonumber + \frac{1}{(Xu_i)^2 + (Yu_i)^2} \left[(Xu_i)(YYu_i)-(Yu_i)(YXu_i) \right]^2 
\\&=& \nonumber \left[ \frac{Yu_i}{ |\nabla_{XY} u_i|  }(XXu_i) -  \frac{Xu_i}{ |\nabla_{XY} u_i|  }(XYu_i) \right]^2  
\\&&\nonumber +
\left[  \frac{Yu_i}{ |\nabla_{XY} u_i|  }(YXu_i) -\frac{Xu_i}{ |\nabla_{XY} u_i|  }(YYu_i)  \right]^2 .
\end{eqnarray}
Notice that the latter is nonenegative and we show that it is in fact related to the  intrinsic curvature of the level sets, just like (\ref{identity}).  As it was introduced earlier, the intrinsic curvature of the level sets of each $u_i$ is 
\begin{equation}
\mathcal A_i =    \div_{XY} \left( \eta_i  \right) = X\eta_i^{(1)} + Y \eta_i^{(2)} ,  
\end{equation}  
where $ \eta_i  =\eta_i^{(1)}  X+ \eta_i^{(2)} Y$ that is the intrinsic normal to the level sets of each $u_i$ on $\{ | \nabla_{XY} u_i | \neq 0 \}$  and denoted by 
\begin{equation}\label{etaiXui}
\eta_i=   \frac{Xu_i}{  |\nabla_{XY} u_i|}    X + \frac{Yu_i}{ |\nabla_{XY} u_i|} Y. 
\end{equation} 
It is straightforward to compute  the curvature and show that
\begin{equation}\label{Ai}
 |\nabla_{XY} u_i|^{3} \mathcal A_i =(XXu_i) (Yu_i)^2 -  (XYu_i) (Xu_i)  (Yu_i)  -(YXu_i) (Xu_i)  (Yu_i) + (YYu_i) (Xu_i)^2   .
\end{equation}  
On the other hand, $\mathcal B_i$ is given by the following formula 
 \begin{equation}\label{BBi}
\mathcal B_i := |\nabla_{XY} u_i|^{-1}  \left[   -Zu_i + \langle H_{ess} u_i \tau_i, \eta_i \rangle \right] , 
\end{equation}  
where  $\eta_i$ is the intrinsic normal 
and $\tau_i $  is the intrinsic tangent direction to the level sets of each $u_i$ on $\{ | \nabla_{XY} u_i | \neq 0 \}$ that is (\ref{etaiXui}) and 
\begin{equation}
\tau_i=   \frac{Yu_i}{  |\nabla_{XY} u_i|}    X - \frac{Xu_i}{ |\nabla_{XY} u_i|} Y. 
\end{equation} 
Again, doing some  straightforward  computations one can get  
\begin{equation}\label{Bi}
 |\nabla_{XY} u_i|^{3} \mathcal B_i =(XXu_i) (Xu_i) (Yu_i) -  (XYu_i) (Xu_i)^2 
   + (YXu_i) (Yu_i)^2 - (YYu_i) (Xu_i) (Yu_i)   .
\end{equation}  
We can rewrite the equation for $\mathcal C_i$ as
\begin{eqnarray}
|\nabla_{XY} u_i|^{4} \mathcal C_i&=&
\left[ (Yu_i)(XXu_i) -  (Xu_i) (XYu_i) \right]^2[(Xu_i)^2 +(Yu_i)^2 ] 
\\&& \nonumber +
  \left[  (Yu_i) (YXu_i) -(Xu_i)(YYu_i)  \right]^2 [(Xu_i)^2 +(Yu_i)^2 ].
  \end{eqnarray}
 Elementary computations show that for all $a,b,c,d,\epsilon,\delta\in\mathbb R$ we have 
   \begin{equation}\label{}
\left[ a\epsilon -b \delta\right]^2[\epsilon^2+\delta^2 ]+  \left[ c\epsilon -d \delta \right]^2[\epsilon^2+\delta^2 ] 
= \left[a\epsilon^2 - (b+c)\epsilon\delta +d\delta^2\right]^2 + \left[c\epsilon^2+(a-d)\epsilon\delta-b\delta^2\right]^2. 
  \end{equation}
We now set $a=XXu_i$, $b=XYu_i$, $c=YXu_i$, $d=YYu_i$, $\epsilon=Xu_i$ and $\delta=Yu_i$ in the above identity to conclude that 
\begin{eqnarray*}\label{}
|\nabla_{XY} u_i|^{4} \mathcal C_i
&=& \left[ (Yu_i)(XXu_i) -  (Xu_i) (XYu_i) \right]^2[(Xu_i)^2 +(Yu_i)^2 ] 
\\&& +
  \left[  (Yu_i) (YXu_i) -(Xu_i)(YYu_i)  \right]^2 [(Xu_i)^2 +(Yu_i)^2 ]
  \\&=& \left[ (XXu_i) (Yu_i)^2 -  (XYu_i) (Xu_i)  (Yu_i) -(YXu_i) (Xu_i)  (Yu_i) + (YYu_i) (Xu_i)^2   \right]^2
  \\& &+ \left[ (XXu_i) (Xu_i) (Yu_i) -  (XYu_i) (Xu_i)^2 + (YXu_i) (Yu_i)^2 - (YYu_i) (Xu_i) (Yu_i)  \right]^2
  \\&=& |\nabla_{XY} u_i|^{6} \mathcal A^2_i +  |\nabla_{XY} u_i|^{6} \mathcal B^2_i,
  \end{eqnarray*}
where we have used (\ref{Ai}) and (\ref{Bi}). Therefore, 
\begin{equation}\label{nablaC}
\mathcal C_i =  |\nabla_{XY} u_i|^{2} \left[\mathcal A^2_i +  \mathcal B^2_i \right]. 
  \end{equation}
The above arguments imply that the following identity holds 
 \begin{eqnarray}\label{identity2} 
&& (XX u_i)^2 +  (X Y u_i)^2 +(YX u_i)^2+(Y Y u_i)^2 - | X| \nabla_{XY} u_i||^2 -| Y|\nabla_{XY} u_i||^2
 \\&=& \nonumber
\left\{
                      \begin{array}{ll}
                     |\nabla_{XY} u_i|^{2} \left[\mathcal A^2_i +  \mathcal B^2_i \right] & \hbox{in $\{|\nabla_{XY} u_i|>0\cap \Omega \}$,} \\
                       0 & \hbox{$a.e.$ in $\{|\nabla_{XY} u_i|=0\cap \Omega\}$,}
                                                                       \end{array}
                    \right.   \end{eqnarray} 
where $\mathcal A_i$ is the intrinsic curvature of the level sets of each $u_i$ and  $
\mathcal B_i $ is denoted by (\ref{BBi}).  This and Proposition \ref{propine} complete the proof. 

\end{proof}   

Consider the following symmetric system of ordinary differential equations  
\begin{equation}\label{ODE}
u_i''(x)=  H_i(u_1(x),\cdots,u_m(x)) \ \ \text{for} \ \ x\in\mathbb R, 
\end{equation}  
that is a particular case of system (\ref{mainL}). It is straightforward to see that the  following Hamiltonian identity holds for  solutions of (\ref{ODE}) 
\begin{equation}\label{ODE11}
\frac{1}{2} \sum_{i=1}^m \left[ u_i' (x) \right]^2 -\tilde H (u_1(x),\cdots,u_m(x)) \equiv C\ \ \text{for} \ \ x\in\mathbb R,
\end{equation}  
 where $\partial_i \tilde H = H_i$ and $C$ is a constant.   
 For the rest of this section, in light of the above identity, we establish a Hamiltonian identity for solutions of  (\ref{mainL}) with  slightly more general operator for $x=(x',x'')\in\mathbb R^{n-k}\times \mathbb R^k$, $n\ge k$ and $k\ge 1$, 
\begin{equation}\label{Opf}
\Delta_{XY} (\cdot) = X_{x'}(X_{x'}(\cdot)) +  f^2 (x') Y_{x''}(Y_{x''}(\cdot)), 
\end{equation}
where  the vector fields $ X_{x'}$ and $ Y_{x''}$ depend only on $x'$ and $x''$, respectively, and $f\in C(\mathbb R^{n-k},\mathbb R)$. 
This in particular covers the  Grushin operator $\Delta_{\mathbb G}$ in two dimensions, given by (\ref{DeltaG}), when $X_x=\partial_{x}$ and $Y_y= \partial_{y}$ and  $f(x)=x$.  Note that for the case of $f\equiv 1$, the operator (\ref{Opf}) is the classical Laplacian operator. For the case of Laplacian operator, similar identities are provided by Gui in \cite{g,g1} and for the case of quasilinear operators by the author in \cite{mf1}.

 \begin{thm}\label{thmHam}  
 Suppose that $x=(x',x'')\in\mathbb R^{n-1}\times \mathbb R$ for $n\ge 2$.  Let $u=(u_i)_{i=1}^m$  be a solution of (\ref{mainL}) in $\mathbb R^n$ for the  operator  given by (\ref{Opf}) when $f=(f_i(x'))_{i=1}^m$ for $f_i\in C(\mathbb R^{n-1},\mathbb R)$. Then there exists a constant $C$ such that  the following Hamiltonian identity holds for every $x'' \in \mathbb R$
  \begin{equation}\label{hamil}
\int_{\mathbb R^{n-1}} \left( \sum_{i=1}^{m} \frac{1}{2}  \left[ |X_{x'} u_i(x)|^2 - f_i^2(x') |Y_{x''}u_i(x)|^2  \right] + \tilde H(u(x))  \right) d x' \equiv C,
\end{equation}
 where $\tilde H$ is defined such that $\partial_i\tilde H(u)=H_i(u)$ and the above integral is finite for at least one value of $x''$ and in addition the integral in (\ref{IRI0})
below tends to zero as $R$ goes to infinity along a sequence.
\end{thm}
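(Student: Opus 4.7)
The plan is to show that the quantity
\begin{align*}
J(x'') := \int_{\mathbb R^{n-1}} \left[\sum_{i=1}^m \tfrac{1}{2}\bigl(|X_{x'} u_i|^2 - f_i^2(x')|Y_{x''} u_i|^2\bigr) + \tilde H(u)\right] dx'
\end{align*}
is constant in $x'' \in \mathbb R$, and then combine this with the integrability at one $x_0''$ to identify the constant. First I would observe that the symmetric system assumption $\partial_j H_i = \partial_i H_j$ from Definition \ref{symmetric}, together with the fact that $\mathbb R^m$ is simply connected, produces a potential $\tilde H \in C^2(\mathbb R^m)$ satisfying $\partial_i \tilde H = H_i$; this fixes the $\tilde H$ appearing in the statement. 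Hypoellipticity of $\Delta_{XY}$ combined with $H_i \in C^1$ yields sufficient smoothness of $u_i$ to differentiate under the integral, which I would justify by dominated convergence using the assumption that $J(x_0'')$ is finite.

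Next I would compute $J'(x'')$ by exchanging $\partial_{x''}$ with the integral and the vector fields. Because $X_{x'}$ depends only on $x'$ (hence commutes with $\partial_{x''}$), $f_i^2(x')$ has no $x''$ dependence, and $Y_{x''}$ acts only on the $x''$ variable, the chain rule yields
\begin{align*}
J'(x'') = \int_{\mathbb R^{n-1}} \sum_{i=1}^m \bigl[X_{x'} u_i \cdot X_{x'}(\partial_{x''} u_i) - f_i^2(x')\, Y_{x''} u_i \cdot Y_{x''}(\partial_{x''} u_i) + H_i(u)\, \partial_{x''} u_i\bigr]\, dx',
\end{align*}
where the last term arises from $\partial_{x''} \tilde H(u) = \sum_i \partial_i \tilde H(u)\, \partial_{x''} u_i = \sum_i H_i(u)\, \partial_{x''} u_i$. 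I would then integrate by parts in $x'$ over a ball $B_R \subset \mathbb R^{n-1}$, using the product rule $X_{x'} u_i \cdot X_{x'}(\partial_{x''} u_i) = X_{x'}(X_{x'} u_i \cdot \partial_{x''} u_i) - X_{x'}(X_{x'} u_i)\, \partial_{x''} u_i$ and the divergence theorem; this produces a boundary integral $\mathcal I(R,x'')$ on $\partial B_R$, which is precisely the quantity referred to in (\ref{IRI0}) in the statement.

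The interior integrand then telescopes: after noting $Y_{x''}(\partial_{x''} u_i) = \partial_{x''}(Y_{x''} u_i)$, the bulk contribution factors as
\begin{align*}
\sum_i \partial_{x''} u_i\,\bigl[-X_{x'}(X_{x'} u_i) - f_i^2(x')\, Y_{x''}(Y_{x''} u_i) + H_i(u)\bigr] = \sum_i \partial_{x''} u_i\,[H_i(u) - \Delta_{XY} u_i] = 0
\end{align*}
by the equation (\ref{mainL}). Hence $J'(x'') = \lim_{R\to\infty}\mathcal I(R,x'')$, and this limit vanishes along a sequence $R_n \to \infty$ by the assumed hypothesis on (\ref{IRI0}), forcing $J'(x'') \equiv 0$ and therefore $J(x'') \equiv C$. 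The main obstacle I foresee is the rigorous control of the boundary term $\mathcal I(R,x'')$: this is precisely the content of the integrability hypothesis in the theorem, and in applications one would extract a favourable sequence of radii by averaging an $L^2$-type global bound over spherical shells in the spirit of Gui's arguments in \cite{g,g1} and the author's arguments in \cite{mf1}.
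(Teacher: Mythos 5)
Your proposal is correct and follows essentially the same route as the paper: differentiate the Hamiltonian in $x''$, use the equation (\ref{mainL}) to make the bulk integrand telescope to zero, and reduce everything to a boundary flux on $\partial B_R$ that vanishes along a sequence of radii. The only (minor) difference is the order of operations --- the paper differentiates the truncated integral $I_R$ over $B_R$, integrates the resulting identity in $x''$ from $0$ to $x''$ to arrive at (\ref{IRI0}), and only then sends $R\to\infty$, which sidesteps the need to justify differentiation under an integral over all of $\mathbb R^{n-1}$ (your appeal to dominated convergence from finiteness at a single $x_0''$ is the one step that would need more care).
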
  
\begin{proof} Suppose that $x=(x',x'')\in\mathbb R^{n-1}\times \mathbb R$ and assume that $B_R$ is a ball of radius $R$ in $\mathbb R^{n-1}$.  Set $I:\mathbb R\to \mathbb R$ as  
\begin{equation}\label{IRx}
I_R(x''):=\int_{B_R}\left( \sum_{i=1}^{m} \frac{1}{2}  \left[ |X_{x'} u_i(x)|^2 -f_i^2(x') |Y_{x''}u_i(x)|^2  \right] + \tilde H(u(x))  \right) d x'. 
\end{equation}
Differentiating $I$ with respect to $x''$,  we obtain 
\begin{eqnarray} \label{}
\partial_{x''} I_R(x'') &=& \sum_{i=1}^{m}  \int_{B_R}   \left[  X_{x'} u_i(x)   X_{x'} \partial_{x''} u_i(x) - f_i^2(x') Y_{x''}u_i(x) Y_{x''} \partial_{x''}u_i(x)  \right]  d x'
\\&& \nonumber \label{IRx2} +  \sum_{j=1}^{m}  \int_{B_R}  H_j(u(x)) \partial_{x''}u_j(x)   d x'. 
\end{eqnarray}
Multiplying both sides of (\ref{mainL}) with $ \partial_{x''}u_j(x)$,  for each index $j$,  we get
\begin{equation}\label{Deltax}
H_j(u(x))  \partial_{x''}u_j(x) =  X_{x'}(X_{x'}(u_j))  \partial_{x''}u_j(x)  +  f_j^2(x')  Y_{x''}(Y_{x''}(u_j))\partial_{x''}u_j(x).
\end{equation}  
Combining the above equalities and applying integration by parts, we conclude 
\begin{eqnarray}\label{IRxpp}
\partial_{x''} I_R(x'') &=& \sum_{i=1}^{m}  \int_{B_R}\left[ X_{x'} u_i(x)   X_{x'} \partial_{x''} u_i(x)  +   X_{x'}(X_{x'}(u_i))  \partial_{x''}u_i(x) \right] dx'
\\&=& \nonumber \sum_{i=1}^{m}  \int_{\partial B_R}\left[  \partial_{\nu(x')} u_i(x)   \partial_{x''}u_i(x) \right] d S(x').
\end{eqnarray}
Integrating the above equation with respect to $x''$, we get 
\begin{equation}\label{IRI0}
 I_R(x'')  -  I_R(0)  = \int_{0}^{x''}  \int_{\partial B_R}\left[  \partial_{\nu(x')} u_i(x)   \partial_{x''}u_i(x) \right] d S(x').
\end{equation}
Sending $R$ to infinity and using the assumptions, the right-hand side of the above equality approaches zero. This completes the proof. 
 
\end{proof}

\section{Flatness  of level sets and reduction of dimensions}\label{secsym}

We start this section with the flatness of level sets for stable solutions of symmetric system (\ref{mainL})  for the Grushin and the Heisenberg vector fields under certain assumptions. This is a consequence of the geometric Poincar\'{e} inequality and   the ideas and mathematical techniques applied in the proofs  can be extended to larger classes of vector fields. At the end of this section, we provide some geometric justifications in regards to the assumption (\ref{TYX}). Note that  when  the vector fields $X$ and $Y$ are commutative (\ref{TYX}) is clearly satisfied. 
 
\begin{thm} \label{thmABG}
 Suppose that  $u=(u_i)_{i=1}^m\in C^2(\mathbb G,\mathbb R^m)$ is a stable solution of symmetric system (\ref{mainL}) in two dimensions for the Grushin vector fields satisfying (\ref{GrXY}) and  
 \begin{equation}\label{TYX}
 Z Yu_i Xu_i- ZXu_iYu_i \le 0. 
 \end{equation}
 In addition, assume that   
\begin{equation}\label{BRGrow}
\int_{B_R} x^2 |\nabla_{\mathbb G} u_i|^2 dx dy\le C R^4 \ \ \text{for all}\ \ R>1\ \  \text{and} \ \ \ 1\le i\le m, 
\end{equation} 
for any $R>1$ when   $B_R=\{ (x,y) \in \mathbb G, x^4 +y^2<R^4  \}$.  Then, the intrinsic curvature of the level sets of $u_i$  vanishes identically that is $\mathcal A_i\equiv 0$ and  in addition $\mathcal B_i\equiv 0$ on $\{|\nabla_{\mathbb G} u_i| \neq 0\}$. 

  \end{thm}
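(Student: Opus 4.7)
The plan is to apply the geometric Poincaré inequality of Theorem \ref{thmine} with a carefully chosen cutoff and to let the cutoff exhaust $\mathbb{G}$, showing that the right-hand side vanishes and thereby forcing $\mathcal{A}_i\equiv 0$ and $\mathcal{B}_i\equiv 0$. Concretely, since $u$ is pointwise-stable there exists $\phi=(\phi_i)$ with each $\phi_i$ nonvanishing in sign; let $s_i:=\mathrm{sign}(\phi_i)\in\{\pm 1\}$ and, for a scalar cutoff $\zeta_R\in C_c^2(\mathbb{G})$ to be chosen, take the vector test function $\zeta_i:=s_i\,\zeta_R$. With this choice, $\zeta_i^2=\zeta_R^2$ while $\zeta_i\zeta_j=s_is_j\,\zeta_R^2$, and after using the symmetry $\partial_jH_i=\partial_iH_j$ to symmetrize the mixed sum, line (\ref{ineq3}) becomes
\begin{equation*}
2\zeta_R^2\sum_{i<j}\partial_jH_i(u)\bigl[\langle\nabla_{XY}u_i,\nabla_{XY}u_j\rangle_{XY}-s_is_j|\nabla_{XY}u_i||\nabla_{XY}u_j|\bigr].
\end{equation*}
A case analysis on the sign of $s_is_j$, combined with the pointwise-stability assumption $\partial_jH_i(u)\phi_i\phi_j<0$ (which gives $\partial_jH_i\cdot s_is_j\le 0$) and the Cauchy-Schwarz estimate $|\langle\nabla_{XY}u_i,\nabla_{XY}u_j\rangle_{XY}|\le|\nabla_{XY}u_i||\nabla_{XY}u_j|$, makes every summand nonnegative; so (\ref{ineq3}) may be discarded with the correct sign. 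Simultaneously, the assumption (\ref{TYX}) turns line (\ref{ineq2}) into a nonnegative contribution to the left-hand side.

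With these two terms absorbed, Theorem \ref{thmine} reduces to
\begin{equation*}
\sum_{i=1}^{m}\int_{\{|\nabla_{\mathbb G}u_i|>0\}}|\nabla_{\mathbb G}u_i|^{2}\bigl[\mathcal{A}_i^{2}+\mathcal{B}_i^{2}\bigr]\zeta_R^{2}\;\le\;\sum_{i=1}^{m}\int_{\mathbb G}|\nabla_{\mathbb G}u_i|^{2}\,|\nabla_{\mathbb G}\zeta_R|^{2}.
\end{equation*}
For the cutoff I would use the standard logarithmic choice built from the Grushin gauge $\rho(x,y):=(x^{4}+y^{2})^{1/4}$: set $\zeta_R\equiv 1$ on $\{\rho\le R\}$, $\zeta_R\equiv 0$ on $\{\rho\ge R^{2}\}$, and interpolate via $\zeta_R:=(2\log R-\log\rho)/\log R$ on the annulus $R\le\rho\le R^{2}$. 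A direct computation gives $|\nabla_{\mathbb G}\zeta_R|^{2}\le x^{2}/((\log R)^{2}\rho^{4})$ there, since $X\rho=x^{3}/\rho^{3}$ and $Y\rho=xy/(2\rho^{3})$.

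Decomposing the annulus $\{R\le\rho\le R^{2}\}$ dyadically into $\{2^{k}R\le\rho\le 2^{k+1}R\}$ for $k=0,\ldots,\lceil\log_{2}R\rceil$, and bounding $\rho^{4}\ge(2^{k}R)^{4}$ on each piece, the growth hypothesis (\ref{BRGrow}) gives
\begin{equation*}
\int_{2^{k}R\le\rho\le 2^{k+1}R}\frac{x^{2}|\nabla_{\mathbb G}u_i|^{2}}{\rho^{4}}\,dx\,dy\;\le\;\frac{1}{(2^{k}R)^{4}}\int_{B_{2^{k+1}R}}x^{2}|\nabla_{\mathbb G}u_i|^{2}\,dx\,dy\;\le\;16C,
\end{equation*}
so summing over $k$ and dividing by $(\log R)^{2}$ yields a right-hand side of order $1/\log R$, which tends to $0$ as $R\to\infty$. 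Letting $R\to\infty$ and invoking monotone convergence on the left forces
\begin{equation*}
\sum_{i=1}^{m}\int_{\{|\nabla_{\mathbb G}u_i|>0\}}|\nabla_{\mathbb G}u_i|^{2}\bigl[\mathcal{A}_i^{2}+\mathcal{B}_i^{2}\bigr]=0,
\end{equation*}
which gives $\mathcal{A}_i\equiv\mathcal{B}_i\equiv 0$ on $\{|\nabla_{\mathbb G}u_i|\ne 0\}$, as desired. The main obstacle is the sign bookkeeping in the term (\ref{ineq3}): the symmetry of $\mathcal{H}$ and the stability condition $\partial_jH_i\phi_i\phi_j<0$ must be combined with the sign-twisted cutoff $\zeta_i=s_i\zeta_R$ in just the right way, and one should verify in each of the two cases $s_is_j=\pm 1$ that the Cauchy-Schwarz bracket has the correct sign to be absorbed; every other step is a routine cutoff-and-dyadic-decomposition argument adapted from the Grushin setting of \cite{fv2}.
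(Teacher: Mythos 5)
Your proof is correct and follows essentially the same route as the paper: the same sign-twisted cutoff $\zeta_i=s_i\zeta_R$ to make the coupling term (\ref{ineq3}) nonnegative, the same logarithmic cutoff built from the Grushin gauge, and the same use of (\ref{TYX}) to absorb (\ref{ineq2}). Your explicit dyadic decomposition actually supplies the justification for the estimate $\int_{B_R\setminus B_{\sqrt{R}}}x^2|\nabla_{\mathbb G}u_i|^2\rho^{-4}\le C\ln R$, which the paper asserts without detail, so the argument is if anything slightly more complete than the original.
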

 
\begin{proof}  Consider the Grushin norm 
\begin{equation}
||(x,y)||_{\mathbb G}=\left( x^4 +y^2 \right)^{\frac{1}{4}} , 
\end{equation}
 and set the following test function for $R>1$, 
 \begin{equation}\label{chiRnorm}
 \chi_R  (x,y):=\left\{
                      \begin{array}{ll}
                        \frac{1}{2}, & \hbox{if $||(x,y)||_{\mathbb G}\le\sqrt{R}$,} \\
                      \frac{ \ln R - \ln ||(x,y)||_{\mathbb G}}{{\ln R}}, & \hbox{if $\sqrt{R}< ||(x,y)||_{\mathbb G}< R$,} \\
                       0, & \hbox{if $||(x,y)||_{\mathbb G}\ge R$.}
                                                                       \end{array}
                    \right.
                      \end{equation} 
Since the system (\ref{mainL}) is {\it orientable}, there exist nonzero functions $\theta_k$, $k=1,\cdots,m$, which do not change sign such that for all  $ i,j\in\{1,\cdots,m\}$ and $i<j$ 
 \begin{equation}\label{sign}
  \partial_j H_{i}(u)\theta_i\theta_j < 0. 
  \end{equation} 
Consider  $\zeta_k:=sgn (\theta_k) \chi_R$ for $1\le k\le m$, where again $sgn (x)$ is the sign function. The geometric  Poincar\'{e} inequality (\ref{ineq1})-(\ref{ineq3}) and the assumption  (\ref{TYX}) imply 
\begin{eqnarray} \label{}
\label{ineq1chi} && \sum_{i=1}^m \int_{B_R \cap \{ | \nabla_{XY} u_i | \neq 0 \}} |\nabla_{XY} u_i|^2 \left[ \mathcal A_i^2 + \mathcal B_i^2 \right] \chi_R^2  \\
 && \label{ineq2chi} +  \sum_{i\neq j=1}^m  \int_{B_{R}}  |\partial_j H_i(u)|  \left[    |\nabla_{XY} u_i| |\nabla_{XY} u_j|    - \theta_i \theta_j  \langle  \nabla_{XY} u_i,\nabla_{XY} u_j\rangle_{XY}   \right]   \chi_R^2
 \\&\le& \label{ineq3chi} \sum_{i=1}^m  \int_{B_R\setminus B_{\sqrt {R} } }  |\nabla_{XY} u_i|^2  |\nabla_{XY}  \chi_R|^2 . 
\end{eqnarray} 
Note that (\ref{ineq1chi}) and (\ref{ineq2chi}) are nonnegative. We now compute (\ref{ineq3chi}). Note that for $(x,y)\in B_R\setminus B_{\sqrt {R} }$ we have 
\begin{equation}
|X\chi_R| = \frac{|x^3|}{ ||(x,y)||^4_{\mathbb G} \ln R} \ \ \text{and} \ \ |Y\chi_R| = \frac{|xy|}{ 2||(x,y)||^4_{\mathbb G} \ln R} ,
\end{equation}
and therefore 
\begin{equation}
|X\chi_R|^2+|Y\chi_R|^2 \le  \frac{x^2}{ ||(x,y)||^4_{\mathbb G} |\ln R|^2 } . 
\end{equation}
From this and the assumption (\ref{BRGrow}), we get 
\begin{equation}\label{nablaXYchi}
\int_{B_R\setminus B_{\sqrt {R} } }  |\nabla_{XY} u_i|^2  |\nabla_{XY}  \chi_R|^2  \le \frac{1}{|\ln R|^2} \int_{B_R\setminus B_{\sqrt {R} } }  \frac{x^2  |\nabla_{XY} u_i|^2 }{ ||(x,y)||^4_{\mathbb G}  } \le   \frac{C}{\ln R}. 
\end{equation}
Sending $R$ to infinity completes the proof. 

\end{proof}   

Note that if we assume that $|\nabla_{\mathbb G} u_i| \in L^{\infty}(\mathbb R^2)$. Then,   
\begin{equation}\label{}
\int_{B_R} x^2 |\nabla_{\mathbb G} u_i|^2 dx dy \le \int_{B_R} x^2 dxdy = C R^5, 
\end{equation} 
where $C$ is a constant that is independent from $R$.  Therefore, the global boundedness of $|\nabla_{\mathbb G} u_i|$ does not imply 
the growth-decay assumption (\ref{BRGrow}). In the following theorem, we provide sufficient conditions that under which the growth-decay assumption (\ref{BRGrow}) holds.   Notice that (\ref{energybound}) implies  (\ref{BRGrow}) whenever $\tilde H(\cdot) \ge \tilde H(a)$.  

\begin{thm} \label{enopthm}
 Suppose that  $u=(u_i)_{i=1}^m$ is a bounded solution of (\ref{mainL}) in two dimensions for Grushin vector fields satisfying (\ref{GrXY}).  In addition, assume that $| \nabla_{\mathbb G}  u_i|\in L^\infty(\mathbb R^2)$ and each $Z u_i$ does not change sign  for all $i=1,..,m$ and 
\begin{equation}
\lim_{y\to \infty} u_i( x,y)=a_i, \ \ \ \forall x\in\mathbb{R}, 
\end{equation}
for some constants $a_i$.  Then 
\begin{equation}\label{energybound}
I_R(u):=\int_{B_{R}}  \left[  \sum_{i=1}^m \frac{1}{2} |\nabla_{\mathbb G} u_i|^2 +  \tilde H(u) - \tilde H(a) \right] d x dy \le C R^{2},
\end{equation}
where  $a=(a_i)_{i=1}^{m}$ and $C$ is  independent from $R$.
 \end{thm}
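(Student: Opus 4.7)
My plan is to combine a localized version of the Hamiltonian identity from Theorem~\ref{thmHam} with monotonicity-based bounds on the vertical energy term $x^2\sum_i(\partial_yu_i)^2$, adapting the classical Alberti--Ambrosio--Cabr\'e energy estimate for layer solutions to the Grushin setting. The first ingredient is the decay of derivatives as $y\to\infty$: by hypoellipticity of $\Delta_{\mathbb G}$ together with the bounded monotone convergence $u_i(x,y)\to a_i$, a standard Arzel\`a--Ascoli argument on the $y$-translates of $u$ upgrades the pointwise convergence to $C^k_{\mathrm{loc}}$ in $x$; in particular $\partial_xu_i(\cdot,y)$, $\partial_yu_i(\cdot,y)\to 0$ and $\tilde H(u(\cdot,y))\to\tilde H(a)$ locally uniformly as $y\to\infty$, which lets me discard boundary contributions at $y=+\infty$ in the subsequent integrations by parts.

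Now fix a cutoff $\phi\in C_c^\infty(\mathbb R)$ with $\phi\equiv 1$ on $[-R,R]$, $\mathrm{supp}\,\phi\subset[-2R,2R]$, and $|\phi'|\le C/R$. Multiplying the equation $\Delta_{\mathbb G}u_i=H_i(u)$ by $\phi^2(x)\partial_yu_i$, integrating over $\mathbb R\times[y_0,\infty)$, integrating by parts in both variables and summing over $i$ produces
\begin{align*}
\mathcal H(y_0) & :=\int_{\mathbb R}\phi^2\left[\tfrac{1}{2}\sum_i(\partial_xu_i)^2-\tfrac{x^2}{2}\sum_i(\partial_yu_i)^2+\tilde H(u)-\tilde H(a)\right]\!(x,y_0)\,dx \\
 & = 2\int_{y_0}^\infty\int_{\mathbb R}\phi\phi'\sum_i(\partial_xu_i)(\partial_yu_i)\,dx\,dy.
\end{align*}
Assuming without loss of generality that $\partial_yu_i\ge 0$, the telescoping identity $\int_{y_0}^\infty\partial_yu_i\,dy=a_i-u_i(x,y_0)\le 2\|u_i\|_\infty$ combined with $|\partial_xu_i|\le M:=\|\nabla_{\mathbb G}u_i\|_\infty$ gives a uniform bound $|\mathcal H(y_0)|\le C$ independent of $y_0$ and $R$.

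For the vertical part I use $|x\partial_yu_i|\le M$ and monotonicity to get $\int_{-R^2}^{R^2}x^2(\partial_yu_i)^2\,dy\le M|x|\int_{-R^2}^{R^2}\partial_yu_i\,dy\le C|x|$, so $\int_{-R^2}^{R^2}\int\phi^2x^2\sum_i(\partial_yu_i)^2\,dx\,dy\le C\int_{-2R}^{2R}|x|\,dx\le CR^2$. Writing the energy density as $e(u)=[\mathcal H\text{-density}]+x^2\sum_i(\partial_yu_i)^2$ and integrating the per-slice identity over $y_0\in[-R^2,R^2]$ yields $\int_{-R^2}^{R^2}\int\phi^2\,e(u)\,dx\,dy\le CR^2$. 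The conclusion $I_R(u)\le CR^2$ should then follow since $B_R\subset[-R,R]\times[-R^2,R^2]$ and $\phi\equiv 1$ on $[-R,R]$.

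\emph{The main obstacle} is precisely this last implication: $e(u)$ is not sign-definite (the term $\tilde H(u)-\tilde H(a)$ may be negative) and the enclosing rectangle properly contains $B_R$ with complementary Euclidean volume $\sim R^3$, so a crude comparison alone would give only $I_R\le CR^3$. To close the gap I would either (i) redo the Pohozaev-type computation with a cutoff adapted to the Grushin ball itself (e.g.\ $\chi(\rho/R)$ with $\rho(x,y)=(x^4+y^2)^{1/4}$), carefully tracking the Grushin derivatives of $\chi$ so that the boundary annulus $B_{2R}\setminus B_R$ has Euclidean measure $\sim R^2$ and contributes only $CR^2$; or (ii) run the one-sided identity from $y_0=-R^2$ upward together with a mirror argument from $y_0=+R^2$ downward (using the decay from the first paragraph at both ends) to directly control the corner regions of the rectangle.
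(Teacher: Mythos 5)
Your localized Hamiltonian identity, the uniform bound $|\mathcal H(y_0)|\le C$, and the $CR^2$ estimate for $\int\phi^2x^2\sum_i(\partial_yu_i)^2$ over the rectangle are all sound, but the proof does not close, and you have correctly located where: passing from the rectangle $[-2R,2R]\times[-R^2,R^2]$ to the Grushin ball $B_R=\{x^4+y^2<R^4\}$ costs a region of Euclidean volume of order $R^3$ on which $\tilde H(u)-\tilde H(a)$ has no sign (the theorem does not assume $\tilde H\ge\tilde H(a)$; the paper only remarks that under that \emph{extra} hypothesis (\ref{energybound}) implies (\ref{BRGrow})). Neither of your proposed repairs works as described. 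For (i), the premise is false: since $|B_R|=|B_1|R^3$ (the Grushin dilation $(x,y)\mapsto(Rx,R^2y)$ has Jacobian $R^3$), the annulus $B_{2R}\setminus B_R$ has Euclidean measure of order $R^3$, not $R^2$, so a cutoff adapted to the Grushin ball does not shrink the bad region. For (ii), the hypotheses give convergence of $u_i$ only as $y\to+\infty$; as $y\to-\infty$ the monotone bounded limit exists but may depend on $x$ and need not send $\tilde H$ to $\tilde H(a)$, so there is no mirror identity; and in any case a Hamiltonian identity controls slice integrals of the indefinite combination $\tfrac12\sum_i(\partial_xu_i)^2-\tfrac{x^2}{2}\sum_i(\partial_yu_i)^2+\tilde H(u)-\tilde H(a)$, which gives you no handle on the sign-indefinite potential alone over the $R^3$-sized corner set.

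The paper's proof takes an entirely different route that sidesteps the sign problem: an Ambrosio--Cabr\'e sliding argument. One sets $u^t(x,y)=u(x,y+t)$, shows $I_R(u^t)\to0$ as $t\to\infty$, computes $\partial_tI_R(u^t)$ using the equation and the divergence theorem so that it collapses to a boundary flux of size $R^2\int_{\partial B_1}Zu^t(\cdots)$, and then integrates in $t$: the fixed sign of $Zu_i$ makes the $t$-integral telescope to $u_i^t-u_i$, which is bounded, giving $I_R(u)\le I_R(u^t)+CR^2$. There the potential term is differentiated in $t$ and absorbed via the equation, so its sign never enters. To salvage your approach you would have to either add the hypothesis $\tilde H\ge\tilde H(a)$ (under which the energy density is nonnegative and your comparison $\int_{B_R}e\le\int\phi^2e\le CR^2$ does close, though this proves a weaker theorem), or switch to the sliding argument.
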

 \begin{proof}  Set the sequence of shift functions $u^t(x,y)=(u_i^t(x,y))_{i=1}^m$  where  $u_i^t(x,y):=u_i(x,y+t)$ for $t\in\mathbb{R}$. Note that $u^t=(u_i^t)_{i=1}^m$ satisfies 
\begin{equation}
\label{maintt}
\Delta_{\mathbb G} u^t_i  =   H_i(u_1^t, \cdots, u_m^t). 
  \end{equation}
The fact that $u_i^t$ is convergent to $a_i$ pointwise, it is straightforward to  see that 
\begin{equation}\label{Ht}
 \int_{B_{R}} (\tilde H(u^t) -\tilde H(a)) dx dy \to 0 \ \ \text{when} \ \ {t\to\infty}.  
 \end{equation}
Now, multiply both sides  of (\ref{maintt}) with $u_i^t-a_i$ and integrate by parts to get
\begin{equation} \label{}
\hfill -\int_{B_{R}}  |\nabla_{\mathbb G} u_i^t|^2 dxdy + \int_{\partial B_{R}}  \partial_\nu u_i^t (u_i^t-a_i)  dS(x,y)=-\int_{B_{R}}   H_i(u^t) (u_i^t-a_i)  dxdy  .
  \end{equation}
Sending $t\to \infty$ in the above,  we get 
\begin{equation}
\int_{B_{R}} |\nabla_{\mathbb G} u_i^t|^2 dxdy \to 0. 
 \end{equation}
From this and (\ref{Ht}),  we conclude  
\begin{equation}\label{energydecayt}
\lim_{t\to\infty} I_R(u^t)=0.
\end{equation}
Differentiating the functional $I_R(u^t)$ with respect to $t$ and using the main equation (\ref{mainL}), we get 
\begin{eqnarray}\label{}
\partial_t I_R(u^t)&=& \sum_{i=1}^m \int_{B_{R}} \left[ \langle \nabla_{\mathbb G} u_i^t , \nabla_{\mathbb G} (Z u_i^t) \rangle_{\mathbb G} +  H_i(u^t) (Z u_i^t) \right] dxdy
\\&=&  \nonumber  \sum_{i=1}^m \int_{B_{R}} \left[ \langle \nabla_{\mathbb G} u_i^t , \nabla_{\mathbb G} (Z u_i^t) \rangle_{\mathbb G} +  (\Delta_{\mathbb G} u^t_i) (Z u_i^t) \right]dxdy
\\&=&  \nonumber  \sum_{i=1}^m \int_{B_{R}}  \left [ \partial_x u^t_i \partial_x Z u^t_i + x^2 \partial_y u^t_i \partial_y Z u^t_i  + 
 \partial_{xx} u^t_i  Z u^t_i + \partial_{yy} u^t_i  Z u^t_i \right] dxdy
 \\&=&  \nonumber R \sum_{i=1}^m \int_{B_{1}}  \left[\partial_{\bar x} u^t_i \partial_{\bar x} Z u^t_i + {\bar x}^2 \partial_{\bar y} u^t_i \partial_{\bar y} Z u^t_i  + 
 \partial_{\bar x \bar x} u^t_i  Z u^t_i + \partial_{\bar y \bar y} u^t_i  Z u^t_i \right] d\bar xd\bar y . 
    \end{eqnarray}
We now apply the standard divergence theorem in the Euclidean sense to conclude 
\begin{eqnarray}\label{ptIR}
\partial_t I_R(u^t)&=& R \sum_{i=1}^m \int_{B_{1}} \div \left[ Z u^t_i \left( \partial_{\bar x}  u^t_i ,  {\bar x}^2 \partial_{\bar y} u^t_i  \right)   \right] d\bar xd\bar y 
\\&=& R^2 \sum_{i=1}^m \int_{\partial B_{1}} Z u^t_i \left( \partial_{x}  u^t_i ,  R{\bar x}^2 \partial_{ y} u^t_i   \right) \cdot \nu(\bar x, \bar y) dS(\bar x,\bar y) , 
    \end{eqnarray}
 where $\nu$ is the standard Euclidean outward normal
of $\partial B_1$.  From the assumptions we have $|\nabla_{\mathbb G} u_i|\in L^{\infty}(\mathbb R^2) $.  Therefore, there exist a constant $M$ that is independent from $R$ such that $|\nabla_{\mathbb G} u_i| \le M$ and 
 \begin{eqnarray}
 |\left( \partial_{x}  u^t_i ,  R{ \bar x}^2 \partial_{ y} u^t_i   \right) \cdot \nu(\bar x,\bar y) | 
 \le  M \ \ \text{for} \ \ (\bar x, \bar y)\in \partial B_1.
 \end{eqnarray}
 On the other hand,  assumptions yield each $Z u^t_i$ does not change sign  for all indices. Therefore,  $Z u_k^t>0>Z u_j^t$ for $k\in  K$ and $j\in  J$ where $J$ and $K$ are disjoint sets such that $J\cup K=\{1,\cdots,m\}$. Therefore, 
  \begin{eqnarray}\label{der-energy-bdMt}
\partial_t I_R(u^t) \ge M R^2  \int_{\partial B_{1}}  \left(\Sigma _{j\in  J} Z u_j^t - \Sigma_{k\in  K} Z u_i^t \right)d S(\bar x,\bar y).
   \end{eqnarray}
Applying the above estimate, we can establish the following upper bound for $I_R(u)$  
 \begin{eqnarray}\label{}
 I_R(u)&=& I_R(u^t)- \int_{0}^{t} \partial_t I_R(u^s) ds 
 \\ &\le&\nonumber I_R(u^t) +M R^2 \int_{0}^{t}  \int_{\partial B_{1}} \left(  \Sigma_{k\in  K} Z u_k^s-  \Sigma_{j\in  J} Z u_j^s \right) dS(\bar x,\bar y) ds
 \\ &=&\nonumber I_R(u^t) +M R^2 \int_{\partial B_{1}}  \left(  \Sigma_{k\in  K} (u_k^t-u_k)+ \Sigma_{j\in  J}(u_j-u_j^t)  \right) dS(\bar x,\bar y).
 \end{eqnarray}
From the monotonicity assumptions, we have  $u_k<u_k^t$ and $u_j^t<u_j$ for all $k\in  K$, $j\in  J$ and $t\in \mathbb{R^+}$. Therefore, 
\begin{equation}\label{energytt}
  I_R(u) \le I_R(u^t) +C R^2 \ \ \ \text{for all} \ \ t\in \mathbb{R^+} , 
\end{equation}  
for $\max\{4M ||u_i||_{L^\infty(\mathbb R^2)},  |\partial B_{1}| \}\le C$ where $C$ is independent from $R$.   Sending  $t\to\infty$ and using  (\ref{energydecayt}), we conclude  that  
 \begin{equation}
 I_R(u) \le  C R^{2}. 
  \end{equation}
This completes the proof.  
 
 \end{proof}   

As it was mentioned earlier,  the results of Theorem \ref{thmABG} hold for general vector fields $X$ and $Y$ whenever the left-hand side of  (\ref{nablaXYchi}) decays to zero,   that is 
\begin{equation}\label{NablaXYchiXY}
\int_{B_R\setminus B_{\sqrt {R} } }  |\nabla_{XY} u_i|^2  |\nabla_{XY}  \chi_R|^2 \to 0 \ \ \text{for} \ \ \ R\to \infty , 
\end{equation}
 for the test function $\chi_R $ that satisfies (\ref{chiRnorm}). We are now ready to prove the flatness of level sets for the  Heisenberg vector fields.  

\begin{thm} \label{thmABH}
 Suppose that  $u=(u_i)_{i=1}^m\in C^2(\mathbb H,\mathbb R^m)$ is a stable solution of symmetric system  (\ref{mainL}) in three dimensions for the Heisenberg vector fields (\ref{HeisXY}), 
 \begin{equation}\label{}
 ZYu_i Xu_i-ZXu_iYu_i \le 0. 
 \end{equation}
 In addition, assume that  
\begin{equation}\label{BRGrowH}
\int_{B_R} (x^2+y^2) |\nabla_{\mathbb H} u_i|^2 dxdydz\le C R^4 \ \text{for all}\  R>1\   \text{and} \  1\le i\le m, 
\end{equation} 
for any $R>1$ when   $B_R=\{ (x,y,z) \in \mathbb H, |x^2+y^2|^2 +z^2<R^4  \}$.   Then, the intrinsic curvature of the level sets of $u_i$  vanishes identically that is $\mathcal A_i\equiv 0$ and  in addition $\mathcal B_i\equiv 0$ on $\{|\nabla_{\mathbb H} u_i| \neq 0\}$. 

  \end{thm}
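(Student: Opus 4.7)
The plan is to mirror the proof of Theorem \ref{thmABG}, with the Grushin gauge replaced by the Korányi-type gauge $N(x,y,z) := (|x^2+y^2|^2 + z^2)^{1/4}$ adapted to the Heisenberg group (consistent with the definition of $B_R$ in the hypothesis), and with the logarithmic cutoff
\begin{equation*}
\chi_R(x,y,z) := \begin{cases} \tfrac{1}{2}, & N(x,y,z) \le \sqrt{R}, \\[2pt] \dfrac{\ln R - \ln N(x,y,z)}{\ln R}, & \sqrt{R} < N(x,y,z) < R, \\[4pt] 0, & N(x,y,z) \ge R. \end{cases}
\end{equation*}
Orientability of the system supplies sign functions $\theta_k$ so that, setting $\zeta_k := \mathrm{sgn}(\theta_k)\,\chi_R$, the interaction terms in (\ref{ineq3}) become nonnegative, and the hypothesis $ZYu_i\,Xu_i - ZXu_i\,Yu_i \le 0$ controls the term (\ref{ineq2}). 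Theorem \ref{thmine} then reduces the conclusion to showing that
\begin{equation*}
\sum_{i=1}^{m}\int_{B_R \setminus B_{\sqrt R}} |\nabla_{\mathbb H} u_i|^2\,|\nabla_{\mathbb H}\chi_R|^2 \longrightarrow 0 \quad \text{as } R \to \infty.
\end{equation*}

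The key computation is to bound $|\nabla_{\mathbb H}\chi_R|^2 = (X\chi_R)^2 + (Y\chi_R)^2$. Using $X = \partial_x - \tfrac{y}{2}\partial_z$ and $Y = \partial_y + \tfrac{x}{2}\partial_z$, a direct calculation gives $XN = [4x(x^2+y^2)-yz]/(4N^3)$ and $YN = [4y(x^2+y^2)+xz]/(4N^3)$. The cross terms $\pm xyz(x^2+y^2)$ cancel, leaving
\begin{equation*}
(XN)^2 + (YN)^2 = \frac{(x^2+y^2)\bigl[16(x^2+y^2)^2 + z^2\bigr]}{16\,N^6} \le \frac{x^2+y^2}{N^2},
\end{equation*}
so that $|\nabla_{\mathbb H}\chi_R|^2 \le (x^2+y^2)/(N^4 (\ln R)^2)$ on the annulus. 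This is the Heisenberg analogue of the corresponding pointwise bound in the Grushin proof, and it is the place where the specific structure of $X,Y$ is essential.

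To finish, I would estimate
\begin{equation*}
\int_{B_R\setminus B_{\sqrt R}} \frac{(x^2+y^2)|\nabla_{\mathbb H}u_i|^2}{N^4}\,dx\,dy\,dz
\end{equation*}
via a dyadic decomposition $B_R\setminus B_{\sqrt R} = \bigcup_{j} (B_{2\rho_j}\setminus B_{\rho_j})$ with $\rho_j = 2^j\sqrt{R}$ and $O(\ln R)$ rings. On each ring $1/N^4 \le \rho_j^{-4}$, while (\ref{BRGrowH}) yields $\int_{B_{2\rho_j}} (x^2+y^2)|\nabla_{\mathbb H}u_i|^2 \le C(2\rho_j)^4$, producing a uniform $O(1)$ contribution per ring and hence $O(\ln R)$ in total. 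Dividing by $(\ln R)^2$ leaves the right-hand side of order $1/\ln R$, which tends to zero. Passing to the limit forces $|\nabla_{\mathbb H} u_i|^2[\mathcal A_i^2 + \mathcal B_i^2] \equiv 0$, i.e.\ $\mathcal A_i \equiv 0$ and $\mathcal B_i \equiv 0$ on $\{|\nabla_{\mathbb H}u_i|\neq 0\}$.

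The main obstacle I anticipate is the clean verification of the pointwise bound on $|\nabla_{\mathbb H} N|^2$: unlike in the Euclidean or Grushin settings, the $\partial_z$-drifts in $X$ and $Y$ must cancel in the sum $(XN)^2+(YN)^2$, and this is a genuinely Heisenberg-specific phenomenon (tied to the choice of gauge). Once that identity is in hand, the orientability argument, the sign discussion for (\ref{ineq2})--(\ref{ineq3}), and the dyadic estimate proceed exactly as in the Grushin case.
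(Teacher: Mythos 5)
Your proposal is correct and follows essentially the same route as the paper: the same Kor\'anyi-type gauge and logarithmic cutoff, the same reduction via Theorem \ref{thmine} and the sign/orientability discussion, and the same pointwise bound $|\nabla_{\mathbb H}\chi_R|^2 \lesssim (x^2+y^2)/(N^4(\ln R)^2)$ combined with (\ref{BRGrowH}). If anything, your computation of $(XN)^2+(YN)^2$ is cleaner than the paper's (whose displayed formulas for $|X\chi_R|$ and $|Y\chi_R|$ contain typos but lead to the same conclusion), and your dyadic decomposition makes explicit the step the paper passes over when deducing the $C/\ln R$ bound.
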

\begin{proof}  Due to similarity in the arguments, we only show that (\ref{NablaXYchiXY}) holds.  Consider the Heisenberg norm 
\begin{equation}
||(x,y,z)||_{\mathbb H}=\left( |x^2+y^2|^2 +z^2 \right)^{\frac{1}{4}},
\end{equation} 
and set the following test function for $R>1$, 
 \begin{equation}
 \chi_R  (x,y,z):=\left\{
                      \begin{array}{ll}
                        \frac{1}{2}, & \hbox{if $||(x,y,z)||_{\mathbb H}\le\sqrt{R}$,} \\
                      \frac{ \log R - \ln ||(x,y,z)||_{\mathbb H}}{{\ln R}}, & \hbox{if $\sqrt{R}< ||(x,y,z)||_{\mathbb H}< R$,} \\
                       0, & \hbox{if $||(x,y,z)||_{\mathbb H}\ge R$.}
                                                                       \end{array}
                    \right.
                      \end{equation} 
Note that for $(x,y,z)\in B_R\setminus B_{\sqrt {R} }$ we have 
\begin{equation}
|X\chi_R| =2  \frac{ |(x^2+y^2)x + yz|}{ ||(x,y,z)||^4_{\mathbb H} \ln R} \ \ \text{and} \ \ |Y\chi_R| = 2 \frac{  |(x^2+y^2)y - yz|}{ ||(x,y,z)||^4_{\mathbb H} \ln R}   , 
\end{equation}
and therefore 
\begin{equation}
|X\chi_R|^2+|Y\chi_R|^2  =   4\frac{x^2+y^2}{ ||(x,y,z)||^4_{\mathbb H} |\ln R|^2 } . 
\end{equation}
From this and the assumption (\ref{BRGrowH}), we get 
\begin{equation}
\int_{B_R\setminus B_{\sqrt {R} } }  |\nabla_{XY} u_i|^2  |\nabla_{XY}  \chi_R|^2  \le \frac{1}{|\ln R|^2} \int_{B_R\setminus B_{\sqrt {R} } }  \frac{(x^2+y^2)  |\nabla_{\mathbb H} u_i|^2 }{ ||(x,y,z)||^4_{\mathbb H}  } \le   \frac{C}{\ln R}. 
\end{equation}
Sending $R$ to infinity completes the proof. 

\end{proof}   
 
Just like in the case of Grushin vector fields,    the global boundedness of $|\nabla_{\mathbb H} u_i|$ does not imply the growth-decay assumption (\ref{BRGrowH}) for the case of Heisenberg vector fields. In addition, straightforward computations show that a counterpart of the energy arguments  as given in Theorem \ref{enopthm}  does not guarantee the assumption (\ref{BRGrowH}) either.

In what follows,  we discuss consequences of $\mathcal A_i\equiv 0$ and  $\mathcal B_i\equiv 0$ on $\{|\nabla_{XY} u_i| \neq 0\}$ and derive certain equations.    If the intrinsic curvature of the level sets of $u_i$  vanishes identically that is $\mathcal A_i\equiv 0$ on $\{|\nabla_{XY} u_i| \neq 0\}$, then 
\begin{eqnarray}\label{Ai1}
0= |\nabla_{XY} u_i|^{3} \mathcal A_i &=&(XXu_i) (Yu_i)^2 -  (XYu_i) (Xu_i)  (Yu_i)
  \\&&\nonumber -(YXu_i) (Xu_i)  (Yu_i) + (YYu_i) (Xu_i)^2. 
\end{eqnarray}  
In addition,  $\mathcal B_i\equiv 0$ on $\{|\nabla_{XY} u_i| \neq 0\}$ implies 
\begin{eqnarray}\label{Bi1}
 0=|\nabla_{XY} u_i|^{3} \mathcal B_i &=&(XXu_i) (Xu_i) (Yu_i) -  (XYu_i) (Xu_i)^2 
 \\&& \nonumber + (YXu_i) (Yu_i)^2 - (YYu_i) (Xu_i) (Yu_i)   .
\end{eqnarray}  
Combining (\ref{Ai1}) and (\ref{Bi1}), via substitution of $(Xu_i)(Yu_i)$ from (\ref{Ai1}) into  (\ref{Bi1}) whenever $XXu_i\neq YYu_i$, we get 
\begin{eqnarray}\label{XuiYui1}
&&(Xu_i)^2 \left[  (YYu_i)^2+ (XYu_i)^2 + (XYu_i) (YXu_i) -(XXu_i) (YYu_i) \right]
\\&=& \label{XuiYui2} (Yu_i)^2 \left[  (XXu_i)^2 +(YXu_i)^2 + (XYu_i) (YXu_i) -(XXu_i) (YYu_i) \right]. 
\end{eqnarray}  
On the other hand, straightforward computations show that
\begin{equation}\label{Hessutau}
\langle H_{ess} u_i\tau_i,\tau_i \rangle_{XY} =  |\nabla_{XY} u_i|^{-2}  |\nabla_{XY} u_i|^{3}   \mathcal A_i =  |\nabla_{XY} u_i|    \mathcal A_i =0,
\end{equation}  
where we have used $0=\mathcal A_i $. We now apply $0=\mathcal B_i $ and (\ref{Hessutau}) to conclude that
\begin{eqnarray}\label{}
H_{ess} u_i\tau_i = \langle H_{ess} u_i\tau_i,\eta_i \rangle_{XY} \eta_i + \langle H_{ess} u_i\tau_i,\tau_i \rangle_{XY} \tau_i= Zu_i \eta_i .
\end{eqnarray}  
 Since $Zu_i=XYu_i-YXu_i$, we get the following 
\begin{eqnarray}\label{}
(XXu_i)(Yu_i) - (XYu_i)(Xu_i)&=&0, 
\\
(YXu_i)(Yu_i) - (YYu_i)(Xu_i)&=&0. 
\end{eqnarray}  
This implies that 
\begin{equation}\label{Hessui}
\det(H_{ess}u_i)=0. 
\end{equation}
This and (\ref{XuiYui1})-(\ref{XuiYui2}) imply 
\begin{equation}\label{XuiYui3}
(Xu_i)^2 \left[  (YYu_i)^2+ (XYu_i)^2\right]
=  (Yu_i)^2 \left[  (XXu_i)^2 +(YXu_i)^2 \right], 
\end{equation}  
that is equivalent to 
 \begin{equation}\label{XuiYui4}
|Xu_i| | \nabla_{XY} Yu_i| 
=  |Yu_i| | \nabla_{XY} Xu_i| . 
\end{equation}  
Therefore, $\mathcal A_i\equiv 0$ and  $\mathcal B_i\equiv 0$ imply  equations (\ref{Hessui}) and (\ref{XuiYui4}) which are Monge-Amp\'{e}re type equations for the level sets.  

In the next theorem, we establish reduction of dimensions for stable solutions of (\ref{mainL}) in two dimensions for the Grushin vector fields.  This can be regarded as De Giorgi type results,  one-dimensional symmetry, for the Grushin operator. We refer interested readers to \cite{fg} for the classical De Giorgi type results in the Euclidean sense for the system of equations.   
 
\begin{thm} \label{thmred}
 Suppose that  $u=(u_i)_{i=1}^m\in C^2(\mathbb G,\mathbb R^m)$ is a  stable solution of (\ref{mainL}) in two dimensions for the Grushin vector fields satisfying (\ref{GrXY}) and 
 \begin{equation}
 ZYu_i Xu_i-ZXu_iYu_i \le 0. 
 \end{equation}
 In addition, assume that $|\nabla_{\mathbb G} u_i| \neq 0$ and  
\begin{equation}\label{BRNablauR4}
\int_{B_R} x^2 |\nabla_{\mathbb G} u_i|^2 dx dy\le C R^4  \ \ \text{for} \ \ R>1.
\end{equation} 
 Then, each $u_i$ depends only on the variable $x$.  
  \end{thm}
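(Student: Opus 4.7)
The plan is to invoke Theorem \ref{thmABG} to reduce to pointwise identities for each $u_i$ on all of $\mathbb{R}^2$, and then exploit the Grushin structure to deduce $\partial_y u_i \equiv 0$. Since the hypotheses of Theorem \ref{thmABG} are precisely those assumed here, and the additional nondegeneracy $|\nabla_{\mathbb G} u_i|\neq 0$ makes $\{|\nabla_{\mathbb G} u_i|\neq 0\}=\mathbb{R}^2$, that theorem gives $\mathcal A_i\equiv 0$ and $\mathcal B_i\equiv 0$ pointwise on $\mathbb{R}^2$. The consequent identity $H_{ess} u_i\,\tau_i = Zu_i\,\eta_i$ derived in the paper then yields, in the Grushin frame $X=\partial_x$, $Y=x\partial_y$, $Z=\partial_y$, the two equations
\begin{equation*}
x\bigl(q\,\partial_{xx} u_i - p\,\partial_{xy} u_i\bigr) = p\,q \quad \text{and} \quad x^{2}\bigl(q\,\partial_{xy} u_i - p\,\partial_{yy} u_i\bigr) = 0,
\end{equation*}
where $p:=\partial_x u_i$ and $q:=\partial_y u_i$. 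Evaluating the first equation at $x=0$ gives $p(0,y)q(0,y)\equiv 0$; since $|\nabla_{\mathbb G} u_i|(0,y) = |p(0,y)|\neq 0$ by hypothesis, we obtain $q(0,y)\equiv 0$, so the open set $U:=\{q\neq 0\}$ is contained in $\{x\neq 0\}$.

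Next, I examine a connected component $V$ of $U$; by connectedness either $V\subset\{x>0\}$ or $V\subset\{x<0\}$, so assume the former. On $V$, the second equation combined with $\partial_{xy}u_i = \partial_y p = \partial_x q$ gives $\partial_y(p/q)=0$, so $p/q=\psi(x)$ depends only on $x$. Substituting $p=\psi(x)q$ into the first equation and simplifying yields $x\psi'(x)=\psi(x)$, hence $\psi(x)=k_V x$ for some constant $k_V$. Therefore the transport equation $\partial_x u_i - k_V x\,\partial_y u_i = 0$ holds on $V$, whose characteristics are the parabolas $y + k_V x^2/2 = \mathrm{const}$. Integrating along these characteristics, $u_i(x,y)=F(y + k_V x^2/2)$ on $V$ for a $C^2$ function $F$ with $F'\neq 0$ throughout $V$ (equivalently, on the $s$-range of $V$ with $s:=y+k_V x^2/2$).

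Fix $(x_0,y_0)\in V$, set $s_0:=y_0 + k_V x_0^2/2$, and trace the characteristic parabola $\gamma_0(x):=(x, s_0 - k_V x^2/2)$ for $x>0$. The connected component of $V\cap\gamma_0$ through $(x_0,y_0)$, parameterized as an open interval in the $x$-variable, must extend down to $x=0^+$: if its left endpoint $a$ were strictly positive, then $\gamma_0(a)\notin U$ (otherwise an open neighborhood of $\gamma_0(a)$ would lie in the same component $V$ and contradict the minimality of $a$), so $\partial_y u_i(\gamma_0(a))=0$; but continuity yields $\partial_y u_i(\gamma_0(a))=\lim_{x\to a^+} F'(s_0)=F'(s_0)\neq 0$, a contradiction. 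Consequently $(0,s_0)\in\overline V$, and passing to the limit $x\to 0^+$ along $\gamma_0$ produces $\partial_x u_i(0,s_0)=\lim k_V x F'(s_0)=0$ together with $\partial_y u_i(0,s_0)=F'(s_0)\neq 0$. Hence $|\nabla_{\mathbb G} u_i|^2(0,s_0) = (\partial_x u_i(0,s_0))^2 + 0\cdot(\partial_y u_i(0,s_0))^2 = 0$, contradicting the nondegeneracy hypothesis. Thus $V=\emptyset$; the same argument applies on $\{x<0\}$, so $U=\emptyset$ and $\partial_y u_i\equiv 0$, that is, $u_i$ depends only on $x$.

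The principal delicate point will be the connectedness step along the characteristic parabola, ruling out the possibility that $V$ is pinched off inside $\{x>0\}$ before reaching the singular axis $\{x=0\}$. This is handled by exploiting that $\partial_y u_i$ is forced to be constant along $V\cap\gamma_0$ (equal to $F'(s_0)$), which is incompatible with the vanishing of $\partial_y u_i$ imposed on $\partial V$ through the standard fact that $\partial V\subset\{q=0\}$ for a connected component $V$ of the open set $U=\{q\neq 0\}$.
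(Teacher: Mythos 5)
Your proposal is correct, and it reaches the conclusion by a route that is organized differently from the paper's. The paper works directly with the level sets: using only $\mathcal A_i\equiv 0$ and a local graph parametrization $y=\gamma_i(x)$ of a component of $\{u_i=\lambda_i\}$ on which $\partial_y u_i\neq 0$, it integrates $\gamma_i'/\sqrt{(\gamma_i')^2+x^2}=\mathrm{const}$ to conclude that such components are parabolas $y=a_ix^2+b_i$, and then (Steps 2--4) shows that such a parabola would have to meet the axis $\{x=0\}$ at its vertex, where $\partial_x u_i=0$ and hence $|\nabla_{\mathbb G}u_i|=0$, a contradiction; the extension of the local graph to the whole component is handled there only by an appeal to ``continuity arguments.'' You instead convert $\mathcal A_i=\mathcal B_i=0$ into the pointwise system $H_{ess}u_i\tau_i=Zu_i\eta_i$, whose two components in the Grushin frame you compute correctly, and you work on the open set $U=\{\partial_y u_i\neq 0\}$: the first equation at $x=0$ forces $U\subset\{x\neq 0\}$, the two equations on a component $V$ integrate to the transport relation $\partial_x u_i=k_Vx\,\partial_y u_i$, and constancy of $\partial_y u_i$ along a characteristic, combined with $\partial V\subset\{\partial_y u_i=0\}$, forces the characteristic to reach the singular axis, where $|\nabla_{\mathbb G}u_i|$ vanishes. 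The underlying geometric mechanism is the same in both proofs (level curves are parabolas with vertex on the $y$-axis, which is incompatible with nondegeneracy of the intrinsic gradient), but your characteristic/connected-component argument replaces, and in fact makes more rigorous, the paper's continuation step; the only points worth making fully explicit are that $p/q$ and then $p/(xq)$ are locally constant with continuous extension on the connected set $V$ (so $k_V$ is globally well defined there), and that constancy of $q$ along the characteristic segment can be checked directly from $\frac{d}{dx}q(\gamma_0(x))=\partial_{xy}u_i-k_Vx\,\partial_{yy}u_i=0$, which follows from your second equation together with $p=k_Vxq$.
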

 \begin{proof}  
 As it was shown in the proof of Theorem \ref{thmABG}, the intrinsic curvature of the level sets of $u_i$  vanishes identically that is $\mathcal A_i\equiv 0$ and $|\nabla_{\mathbb G} u_i|\neq 0$. We provide the proof in several steps.

 \noindent {\bf Step 1}. Suppose that $\Gamma_i$ is a connected component of the level set $\{(x,y)\in\mathbb R^2; u_i(x,y)=\lambda_i\}$ such that 
 \begin{equation}\label{gammas}
 \Gamma_i \subseteq \{(x,y)\in\mathbb R^2; \partial_y u_i(x,y)\neq0\}. 
 \end{equation}
 Then, there exist constants  $a_i,b_i\in\mathbb R$ such that 
  \begin{equation}\label{yab}
 \Gamma_i \subseteq \{(x,y)\in\mathbb R^2 ; y=a_ix^2+b_i\}. 
  \end{equation}
  
Consider $(x_*,y_*)\in\Gamma_i$ such that $\partial_y u_i(x_*,y_*)\neq 0$. There exists a smooth function $\gamma_i:(x_*-\epsilon, x_*+\epsilon)\to\mathbb R$ such that $\gamma_i(x_*)=y_*$ in $\Gamma^*_i\subset \Gamma_i$ and 
\begin{equation}
\Gamma^{*}_i \cap \Gamma_i =\{(t,\gamma_i(t) ); x_*-\epsilon<t< x_*+\epsilon \}. 
\end{equation}
We now define 
\begin{equation}
v_i(x,y) := y- \gamma_i(x). 
\end{equation} 
Note that $\nabla_{\mathbb G} v_i(x,y)=(-\gamma'_i(x),x)$ and
\begin{equation}
\partial_xu_i(t,\gamma_i(t)) + \partial_y u_i(t,\gamma_i(t)) \gamma_i'(t) = \partial_t \left[ u_i(t,\gamma_i (t))\right] = \partial_t \left[\lambda_i \right]= 0. 
\end{equation}
From this, we conclude that if $|\nabla_{\mathbb G} v_i(x,y)|=0$ then $x=0$ and $\gamma_i'(0)=0$ that implies $\partial_x u_i(x,y)=0$ and $\partial_y u_i(x,y)=0$ that is $|\nabla_{\mathbb G} u_i(x,y)| = 0$.     Since the intrinsic curvature of the level sets of $u_i$  vanishes identically and the fact that $|\nabla_{\mathbb G} u_i|\neq 0$ and $|\nabla_{\mathbb G} v_i|\neq 0$ in $\Gamma^{*}_i \cap \Gamma_i$, we can use $v_i$ to compute the intrinsic curvature as 
\begin{equation}
0=\mathcal A_i=\div_{\mathbb G} \left[ \frac{(-\gamma'_i(x),x)}{\sqrt{(\gamma '_i(x))^2 + x^2} } \right]
=  - \partial_x \left[ \frac{\gamma'_i(x) }{ \sqrt{ \left(\gamma '_i(x) \right)^2 + x^2} }\right]. 
\end{equation}
Therefore, there exists a constant $C^*_i$ such that for all $x_*-\epsilon<x< x_*+\epsilon$,  
\begin{equation}
 \frac{\gamma'_i(x) }{ \sqrt{ \left(\gamma '_i(x) \right)^2 + x^2} } =C^*_i. 
\end{equation}
This implies that $\gamma'_i(x)$ has a fixed sign and more importantly there exist constants $A^*_i$ and $B_i^*$ such that 
\begin{equation}
\gamma_i(x) =A^*_i x^2 + B^*_i. 
\end{equation}
Applying some continuity arguments, the above formulation of $\gamma_i$ can be extended to the entire $\Gamma_i$.  This completes the proof of  (\ref{yab}).

\noindent {\bf Step 2.}  Suppose that $\Gamma_i$ is a connected component of the level set $\{(x,y)\in\mathbb R^2; u_i(x,y)=\lambda_i\}$ such that (\ref{gammas}) holds. Then, $\Gamma_i$ and $ \{(0,y); y\in\mathbb R\} $ are disjoint sets. 

From Step 1, we know that (\ref{yab}) holds. Suppose that $(0,b_i)\in \Gamma_i$. Then,  $u_i(x,a_i x^2 +b_i)=\lambda_i$ for $|x|<\epsilon$ for some positive $\epsilon$ that implies
 \begin{equation}
 0=\partial_x \lambda_i=\partial_x u_i(x,a_i x^2 +b_i) + 2 a_i x\partial_y u_i(x,a_i x^2 +b_i). 
 \end{equation}
We now set $x=0$ to conclude that $\partial_x u_i(0,b_i) =0$ and therefore 
\begin{equation}
|\nabla_{\mathbb G}(0,b_i)|^2=|\partial_x u_i(0,b_i)|^2+ |(0)\partial_y u_i(0,b_i)|^2 =0,
\end{equation}
that is a contradiction. Therefore,  $\Gamma_i$ and $ \{(0,y); y\in\mathbb R\} $ are disjoint sets.

 \noindent {\bf Step 3.}  Suppose that $\Gamma_i$ is a connected component of the level set $\{(x,y)\in\mathbb R^2; u_i(x,y)=\lambda_i\}$ such that  $\partial_y u_i(x_*,y_*)\neq 0$ for some  $(x_*,y_*)\in\Gamma_i$. Then, 
  \begin{equation}\label{gammayu}
 \Gamma_i \subseteq \{(x,y)\in\mathbb R^2; \partial_y u_i(x,y)\neq0\}. 
 \end{equation}
 
If (\ref{gammayu}) is not true, then there is a point $(\bar x,\bar y)\in\mathbb R^2$ such that $\partial_y u_i(\bar x,\bar y)=0$.  In addition, there is a sufficiently smooth curve $\gamma_i:(\bar x-\epsilon,\bar x+\epsilon)\to\mathbb R$ such that $\gamma_i(\bar x)=\bar y$ that gives 
\begin{equation}\label{yuxy}
\partial_y u_i(\bar x,\bar y)=0. 
\end{equation}
 From Step 1, we conclude that $\gamma_i$ is parabolic that is $\gamma_i(x)=a_i x^2 +b_i$ that implies $u_i(x,a_i x^2 +b_i)=\lambda_i$. Therefore, 
 \begin{equation}
 0=\partial_x \lambda_i=\partial_x u_i(x,a_i x^2 +b_i) + 2 a_i x\partial_y u_i(x,a_i x^2 +b_i). 
 \end{equation}
From this and (\ref{yuxy}),  we obtain  
\begin{equation}\label{xuxy}
\partial_x u_i(\bar x, a_i \bar x^2+b_i)=0. 
\end{equation}
Combining  (\ref{xuxy}) and (\ref{yuxy}), we conclude $|\nabla_{\mathbb G} u_i(\bar x,a_i \bar x^2+b_i)|=0$ that is a contradiction. Therefore, (\ref{gammayu}) holds. 
 
\noindent {\bf Step 4.} For all $(x,y)$ that belongs to the level sets $\{(x,y)\in\mathbb R^2; u_i(x,y)=\lambda_i\}$, we have $\partial_yu_i(x,y)=0$ and $\partial_xu_i(x,y) \neq 0$. 

Suppose that there exists  $(x^*,y^*)$ in the level sets such that $\partial_yu_i(x^*,y^*) \neq 0$. Then, consider  a connected component $\Gamma_i$ of the level sets such that  $\partial_y u_i(x_*,y_*)\neq 0$. From Step 3, we conclude that (\ref{gammayu}) holds. Now, Step 1 yields  $\Gamma_i$ is a parabola with vertex on $(0,b_i)$ that intersects the plane $x=0$ and this contradicts Step 2. This proves $\partial_yu_i(x,y)=0$ for all $(x,y)\in\mathbb R^2$ such that $u_i(x,y)=\lambda_i$, and the fact that $|\nabla_{\mathbb G}u_i(x,y)|\neq 0$ implies $\partial_xu_i(x,y) \neq 0$. 

In order to complete the proof, we consider the following parametrization of the level sets 
\begin{equation}
 u_i(f_i(t),g_i(t))=\lambda_i  \ \ \text{for some} \ \ t\in I\subset\mathbb R.
 \end{equation}
  From this and Step 4, we conclude  
\begin{equation}
\partial_x u_i(f_i(t),g_i(t)) f'_i(t) + \partial_y u_i(f_i(t),g_i(t)) g'_i(t)=  \partial_x u_i(f_i(t),g_i(t)) f'_i(t) =0, 
\end{equation}
that gives $f'_i(t)\equiv 0$ that is $f_i(t)\equiv C_i$ where $C_i$ is a constant for each $1\le i\le m$.  Therefore, the level sets of $u_i$ are vertical straight lines. 
 
 \end{proof}

We now provide a counterpart of Theorem \ref{thmred} when the assumption (\ref{BRNablauR4}) and stability are replaced with the local minimality.   In what follows,  we provide  the notion of minimizers of the energy functional
 \begin{equation}\label{E}
 E(u, \Omega)=\int_{\Omega}  \left[\frac{1}{2} \sum_{i=1}^m  |\nabla_{XY} u_i|^2 + \tilde H(u)\right]dxdy ,
 \end{equation}
where $\tilde H\in C^2(\mathbb R^m)$ such that $\tilde H(\alpha)=\tilde H(\beta)=0\le\tilde H(\cdot)$ for some $\alpha=(\alpha_i)_{i=1}^m$ and $\beta=(\beta_i)_{i=1}^m$.  
\begin{dfn} We say $u=(u)_{i=1}^m\in C^2(\Omega,\mathbb R^m)$ for an open subset $\Omega$ of $\mathbb R^n$ is a local minimum of  (\ref{E})  whenever  
 \begin{equation}
 E(u,\Pi) \le  E(v, \Pi) ,
 \end{equation}
 for bounded open subset $\Pi\subset \Omega$ and $v=(v_i)_{i=1}^m$ such that $v=u$ in $\Omega\setminus\Pi$.  
 \end{dfn}

 \begin{thm} \label{}
 Suppose that  $u=(u_i)_{i=1}^m$ is a local minimum for (\ref{E}) in two dimensions for Grushin vector fields satisfying (\ref{GrXY}).  Assume that $\alpha_i\le u_i\le \beta_i$,  $|\nabla_{\mathbb G} u_i|\in L^\infty$ such that $|\nabla_{\mathbb G} u_i |\neq 0$ for each index $i$,  and  
 \begin{equation}\label{ZYuXu2}
 ZYu_i Xu_i-ZXu_iYu_i \le 0. 
 \end{equation}
  Then, each $u_i$ depends only on the variable $x$.  
  \end{thm}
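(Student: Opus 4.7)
The plan is to reduce the statement to Theorem \ref{thmred} by verifying that a local minimizer (i) satisfies the stability inequality (\ref{stability}) and (ii) obeys the growth bound (\ref{BRNablauR4}); the remaining hypotheses of Theorem \ref{thmred} are already part of the statement. Since $H_i = \partial_i \tilde H$, we have $\partial_j H_i = \partial_i \partial_j \tilde H$, which is symmetric in $i,j$; hence the system (\ref{mainL}) falls under Definition \ref{symmetric}. For stability, take $\zeta = (\zeta_i)_{i=1}^m$ with $\zeta_i \in C_c^\infty(\mathbb{R}^2)$ and let $\Pi$ be a bounded open set containing their supports. Local minimality forces the map $t \mapsto E(u + t\zeta, \Pi)$ to have a minimum at $t = 0$, so its second derivative at $t=0$ is nonnegative:
\[
0 \le \int_\Pi \sum_{i=1}^m |\nabla_{\mathbb G} \zeta_i|^2\, dxdy + \int_\Pi \sum_{i,j=1}^m \partial_i \partial_j \tilde H(u)\, \zeta_i \zeta_j\, dxdy,
\]
which, after using $\partial_i \partial_j \tilde H = \partial_j H_i$, is exactly (\ref{stability}).

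For the growth bound, fix $R > 1$, let $\rho = \|(x,y)\|_{\mathbb G} = (x^4 + y^2)^{1/4}$, and pick $\phi \in C^\infty(\mathbb{R})$ with $\phi \equiv 1$ on $(-\infty, 0]$, $\phi \equiv 0$ on $[1, \infty)$, and $|\phi'| \le C$. Set $\chi(x,y) = \phi(\rho(x,y) - R + 1)$, so that $\chi \equiv 1$ on $B_{R-1}$ and $\chi \equiv 0$ outside $B_R$. A direct computation yields
\[
|\nabla_{\mathbb G} \rho|^2 = \frac{x^2(3x^4 + \rho^4)}{4\rho^6} \le \frac{x^2}{\rho^2} \le 1,
\]
so $|\nabla_{\mathbb G} \chi| \le C$ uniformly in $R$. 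Define the competitor $v_i := (1 - \chi)u_i + \chi \alpha_i$, which agrees with $u$ off $B_R$. By local minimality, $E(u, B_R) \le E(v, B_R)$. On $B_{R-1}$ we have $v = \alpha$ and $\tilde H(\alpha) = 0$, so the integrand vanishes there. On the annulus $B_R \setminus B_{R-1}$, the estimate $|\nabla_{\mathbb G} v_i|^2 \le 2(1-\chi)^2 |\nabla_{\mathbb G} u_i|^2 + 2(u_i - \alpha_i)^2 |\nabla_{\mathbb G} \chi|^2$ together with the $L^\infty$ assumptions on $u$ and $\nabla_{\mathbb G} u$ and the continuity of $\tilde H$ bound the integrand by a constant $K$ independent of $R$. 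Since the Grushin plane has homogeneous dimension $Q = 3$, $|B_R \setminus B_{R-1}| \le CR^2$, and consequently $E(u, B_R) \le CR^2$.

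Using $\tilde H \ge 0$, the energy bound yields $\sum_i \int_{B_R} |\nabla_{\mathbb G} u_i|^2 \le CR^2$; since $|x| \le \rho \le R$ on $B_R$,
\[
\int_{B_R} x^2 |\nabla_{\mathbb G} u_i|^2\, dxdy \le R^2 \int_{B_R} |\nabla_{\mathbb G} u_i|^2\, dxdy \le CR^4,
\]
which is (\ref{BRNablauR4}). Theorem \ref{thmred} then applies and gives that each $u_i$ depends only on $x$. The main obstacle is the competitor-energy step: a naive cutoff of Grushin-width proportional to $R$ would only yield $E(u, B_R) \le CR^3$, which (after multiplication by $x^2 \le R^2$) is one power too large to produce (\ref{BRNablauR4}); exploiting the full homogeneous-dimension scaling through a width-one Grushin cutoff is essential.
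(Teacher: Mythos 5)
Your proposal is correct and follows essentially the same route as the paper: local minimality gives the stability inequality (via the second variation, which the paper merely asserts), and a competitor that transitions from $u$ to the constant $\alpha$ across a Grushin-annulus of width $O(1)$ yields $E(u,B_R)\le CR^2$, hence $\int_{B_R}x^2|\nabla_{\mathbb G}u_i|^2\le CR^4$, after which Theorem \ref{thmred} applies. The only cosmetic difference is your linear-interpolation competitor $(1-\chi)u_i+\chi\alpha_i$ in place of the paper's truncation $\min\{\psi_i,u_i\}$; both exploit the same width-one cutoff and the bound $|\nabla_{\mathbb G}\rho|\le 1$.
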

  \begin{proof} 
  We show that the minimality assumption implies that the growth-decay estimate (\ref{BRNablauR4}) holds.  Consider the following test function for $R>1$ and $0<\epsilon<1$, 
   \begin{equation}\label{psixy}
 \psi_i  (x,y):= \psi_i  (||(x,y)||_{\mathbb G})=\left\{
                      \begin{array}{ll}
                       \beta_i, & \hbox{if $||(x,y)||_{\mathbb G}\ge R$,} \\
                       \alpha_i  , & \hbox{if $||(x,y)||_{\mathbb G}\le R-\epsilon$,}
                                                                       \end{array}
                    \right.
                      \end{equation} 
such that $ | \psi'_i  | \le \epsilon^{-1}$ when $R-\epsilon \le ||(x,y)||_{\mathbb G}\le R$. Note that 
\begin{equation}\label{XNorm}
|X ||(x,y)||_{\mathbb G} | = \frac{|x^3|}{ ||(x,y)||^3_{\mathbb G} }\le 1 \ \ \text{and} \ \ |Y  ||(x,y)||_{\mathbb G} | = \frac{|xy|}{ 2||(x,y)||^3_{\mathbb G} } \le 1. 
\end{equation}
From (\ref{XNorm}) and (\ref{psixy}) in $R-\epsilon \le ||(x,y)||_{\mathbb G}\le R$,  we conclude 
\begin{equation}
|\  ||\nabla _{\mathbb G } \psi_i ||_{\mathbb G} | \le  | \psi'_i  (||(x,y)||_{\mathbb G})  |  \left[|X ||(x,y)||_{\mathbb G} | ^2 + |Y ||(x,y)||_{\mathbb G} | ^2 \right]
\le C \epsilon^{-1},
\end{equation}
where $C$ is independent from $R$ and $\epsilon$. We now define
 \begin{equation}
 v_i(x,y):=\min\{\psi_i  (x,y), u_i(x,y)\}.
 \end{equation}
  Note that $v_i (x,y)=u_i (x,y) $ when  $||(x,y)||_{\mathbb G}\ge R$ and $v_i (x,y)=\alpha_i  $ when  $||(x,y)||_{\mathbb G}\le R-\epsilon$. From the minimality of $u$ we conclude 
\begin{eqnarray}\label{ER}
 E_R(u) &=& \int_{B_R} \left[\frac{1}{2} \sum_{i=1}^m  |\nabla_{\mathbb G} u_i|^2 + \tilde H(u) \right]dxdy
 \\ &\le& \nonumber \int_{B_R}\left[ \frac{1}{2} \sum_{i=1}^m  |\nabla_{\mathbb G} v_i|^2 + \tilde H(v) \right] dxdy
 \\ &\le& \nonumber \int_{B_R\setminus B_{R-\epsilon}} 
 \left[ \frac{1}{2} \sum_{i=1}^m  |\nabla_{\mathbb G} \psi_i|^2 +  \frac{1}{2} \sum_{i=1}^m  |\nabla_{\mathbb G} u_i|^2+  ||\tilde H||_{L^\infty} \right] dxdy 
  \\ &\le& \nonumber C_{\epsilon,\alpha,\beta}  \int_{B_R\setminus B_{R-\epsilon}}  dx dy  , 
\end{eqnarray}
where $C_{\epsilon,\alpha,\beta} $ is independent from $R$ and we have used the assumption that $|\nabla_{\mathbb G} u_i|$ is globally bounded. The above implies that for each index $i$, we have
\begin{equation}\label{ER2}
  \int_{B_R}  |\nabla_{\mathbb G} u_i|^2 dxdy \le C_{\epsilon,\alpha,\beta}  R^{2},
\end{equation} 
where we have used the fact that $B_R=\{(x,y); x^4+y^2<R^4\}$  and $|B_R|=|B_1| R^3$. The above estimate (\ref{ER2}) implies that 
\begin{equation}\label{ER4}
  \int_{B_R} x^2 |\nabla_{\mathbb G} u_i|^2 dxdy \le  R^2 \int_{B_R}  |\nabla_{\mathbb G} u_i|^2 dxdy \le C_{\epsilon,\alpha,\beta}  R^{4}. 
\end{equation} 
Note also that the minimality implies the stability inequality (\ref{stability}). The rest of the proof follows from the proof of Theorem \ref{thmred}. 
  
  \end{proof}

We end this section with briefly  justifying  the following assumption in the previous theorems
\begin{equation}\label{ZXYINE}
2\left[ ZYu_i Xu_i-ZXu_iYu_i  \right]\le 0.
 \end{equation}
Note that in the case of commutative vector fields, including the classical Laplacian operator,  the above is identically zero. Suppose now that $|\nabla_{XY} u_i| \neq 0$.  Without loss of generality, we assume that $|Xu_i| \neq 0$. Therefore, 
 \begin{equation}\label{ZYXu}
 ZYu_i Xu_i-ZXu_iYu_i = |Xu_i|^2 Z\left(\frac{Yu_i}{Xu_i} \right). 
 \end{equation}
Now, consider the function $\phi(y)=x(y)$ and  $\psi(y)= \phi^2(y)$ on  the level sets of each $u_i$ that is $\{u_i(x,y)=\lambda_i\}$.  Therefore, we have $u_i(\phi(y),y)=\lambda_i$ that implies \begin{equation}
 \partial_x u_i \left[ \phi(y) \right]' + \partial_y u_i  =0. 
\end{equation} 
This is equivalent to   
\begin{equation}\label{xuphi}
\frac{1}{2}\partial_x u_i \left[ \psi(y) \right]'+ x(y)\partial_y u_i =0. 
\end{equation}
For the Grushin vector fields, the condition (\ref{ZXYINE})  is equivalent to 
\begin{equation}\label{gruxyle0}
 \partial_y \left(  -x\frac{\partial_x u_i}{\partial_y u_i} \right) \le 0. 
\end{equation}
On the other hand, $ -\frac{\partial_x u_i}{\partial_y u_i}$ is the slope of the level
curves written as a graph over the $x$-axis. Consequently,   the condition (\ref{gruxyle0}) can be regarded as  the slopes of the upper points on the curve (in
$y$)  are smaller for $x > 0$ and larger for $x < 0$ if one writes the level curves of $u_i$ as graphs over the $x$-axis.

\end{document}